\newcommand{\be}{\begin{eqnarray}}
\newcommand{\ben}{\begin{eqnarray*}}
\newcommand{\en}{\end{eqnarray}}
\newcommand{\enn}{\end{eqnarray*}}
\newtheorem{theorem}{Theorem}[section]
\newtheorem{lemma}{Lemma}[section]
\newtheorem{prp}[theorem]{Proposition}
\newtheorem{thm}[theorem]{Theorem}
\newtheorem{dfn}{Definition}[section]
\newtheorem{remark}{Remark}
\begin{document}
\renewcommand{\theequation}{\arabic{section}.\arabic{equation}}
\begin{titlepage}
\title{\bf Ergodicity for a class of semilinear stochastic partial differential equations
}
\author{\ \ Zhao Dong$^{1,2}$,\ \ Rangrang Zhang$^{3,}$\thanks{Corresponding author.}\\
{\small $^1$ RCSDS, Academy of Mathematics and Systems Science, Chinese Academy of Sciences, Beijing 100190, China.}\\
{\small $^2$  School of Mathematical Sciences, University of Chinese Academy of Sciences.}\\
{\small $^3$ Department of  Mathematics,
Beijing Institute of Technology, Beijing, 100081, China}\\
({\sf dzhao@amt.ac.cn}, {\sf rrzhang@amss.ac.cn})}
\date{}
\end{titlepage}
\maketitle

\noindent\textbf{Abstract}: In this paper, we establish the existence and uniqueness of invariant measures for a class of semilinear stochastic partial differential equations driven by multiplicative noise on a bounded domain. The main results can be applied to SPDEs of various types such as the stochastic Burgers equation and the reaction-diffusion equations perturbed by space-time white noise.

\noindent \textbf{AMS Subject Classification}: Primary 60H15; Secondary 35B40 35R60 37A25

\noindent\textbf{Keywords}: Semilinear partial differential equations; Space-time white noise; Invariant measures;  Strong Feller property; Irreducibility.

\section{Introduction}
In this paper, we are concerned the following semilinear stochastic partial differential equations (SPDE):
\begin{eqnarray}\label{e-1}
\frac{\partial u(t,x)}{\partial t}=\frac{\partial^2 u(t,x)}{\partial x^2}+b(t,x,u(t,x))+\frac{\partial g(t,x, u(t,x))}{\partial x}+\sigma(t,x,u(t,x))\frac{\partial^2}{\partial t\partial x}W(t,x)
\end{eqnarray}
with Dirichlet boundary condition
\begin{eqnarray*}
u(t,0)=u(t,1)=0, \quad t\in [0,T]
\end{eqnarray*}
and the initial condition
\begin{eqnarray*}
u(0,x)=f(x)\in L^2([0,1]),
\end{eqnarray*}
where $W(t,x)$ denotes the Brownian sheet on a filterd probability space $(\Omega, \mathcal{F}, \{\mathcal{F}_t\}, P)$ with expectation $E$. The functions $b=b(t,x,r)$, $g=g(t,x,r)$, $\sigma=\sigma(t,x,r)$ are Borel functions of $(t,x,r)\in \mathbb{R}^+\times [0,1]\times \mathbb{R}$. Linear growth on $b$ and quadratic growth on $g$ are assumed in subsection \ref{sec-1}. Hence, the semilinear SPDE (\ref{e-1}) contains both the stochastic Burgers equation and the stochastic reaction-diffusion equations as special cases.

There are several recent works about the semilinear SPDE (\ref{e-1}). We only mention
two of them which are relevant to our work. The existence and uniqueness of solutions to (\ref{e-1}) was studied by Gy\"{o}ngy in \cite{G98}, where the author established global well-posedness of (\ref{e-1}) in the space $C([0,T]; L^2([0,1]))$. Based on \cite{G98}, Foondun and Setayeshgar \cite{FS17} proved the large deviations principle uniformly on compact subsets of $C([0,T]; L^2([0,1]))$ for the law of the solutions to (\ref{e-1}).

The present paper is devoted to the ergodicity of the semilinear SPDE (\ref{e-1}). Firstly, we prove the existence of invariant measures by utilizing the Krylov-Bogolyubov theorem (for more details on this theorem, see \cite{D-Z}). During the proof process, the tightness of solutions to (\ref{e-1}) in $C([0,T]; L^2([0,1]))$ plays a key role.
Secondly, we establish the uniqueness of invariant measures of (\ref{e-1}). To achieve it, we apply the Doob's method (see \cite{D-Z}). Based on this method, our proof is twofold. For the strong Feller property, we apply the strategy of truncation. It's worth mentioning that it is a quite effective technique for handling locally Lipschitz nonlinearities in stochastic equations. To learn more about this method, we refer the readers to \cite{D-G,D-P,D-Z-1,F-M} and so on. Utilizing the Bismut-Elworthy-Li formula, the strong Feller property of the truncating equations is obtained. Further, with the aid of weak-strong uniqueness principle in \cite{F-M}, we deduce that the semigroup associated with (\ref{e-1}) is strong Feller. For the irreducibility, it can be transformed to a control problem. The truncating equations is also crucial to our proof. By making energy estimates and using Girsanov theorem, we firstly obtain the irreducibility of the truncating equations. Then, due to the fact that the solution of truncating equations converges to the solution of (\ref{e-1}) in probability, we finally conclude the irreducibility of (\ref{e-1}).

\

This paper is organized as follows. The mathematical formulation of the semilinear stochastic partial differential equations and main results are in Sect. 2. The existence of invariant measures is proved in Sect. 3. The uniqueness of invariant measures is established by proving the strong Feller property and the irreducible property of (\ref{e-1}), whose proof is in Sect. 4. Finally, application to some examples are presented in Sect. 5.

\section{Framework and statement of main result}
Let $L^p([0,1]), p\in(0,\infty]$ be the Lebesgue space, whose norm is denoted by $|\cdot|_p$.
In particular, denote $H=L^2([0,1])$ with the corresponding norm  $|\cdot|_H$ and inner product $(\cdot,\cdot)_H$.

Define an operator $A:= \frac{\partial^2}{\partial x^2}$. Let $G_{t}(x,y)=G(t,x,y), t\geq0, x,y\in [0,1]$ be the Green function for the operator $\partial_t-A$  with Dirichlet boundary condition. Then, it satisfies that
\begin{eqnarray}\label{e-29}
\partial_t G_{t}(x,y)=AG_{t}(x,y).
\end{eqnarray}
Moreover, referring to \cite{Wu}, we have the following property:
\begin{eqnarray}\label{e-28}
\int^1_0G_{t}(x,y)G_{s}(y,z)dy=G_{t+s}(x,z),\quad {\rm{for}}\ t,s\geq0,\  x,z\in [0,1].
\end{eqnarray}
Let $\{e_n(x)\}_{n\geq 1}$ be the eigenvectors of  $A$ (equipped with the Dirichlet boundary) constituting an orthonormal system of $H$.
 Put
 \[
 \beta_n(t)=\int^t_0\int^1_0 e_n(x)W(dsdx),
 \]
 then, $\{\beta_n(t), n\geq 1\}$ is a sequence of independent Brownian motions. Define
 an $H-$cylindrical Brownian motion by $W(t)=\sum^{\infty}_{n=1}\beta_n e_n$ and
a mapping $\Sigma(\cdot)$ by
\[
\Sigma(f)h(x)=\sigma(f(x))h(x), \ f,h\in H.
\]
Then, the following
\[
\sigma(u(t,x,f))\dot{W}(t,x)dtdx=\Sigma((u(t,f)))dW(t)
\]
is the stochastic It\^{o} integral against the cylindrical Brownian motion.
\subsection{Assumptions}\label{sec-1}
We adopt assumptions from  \cite{FS17} or \cite{G98}.
The functions $b=b(t,x,r)$, $g=g(t,x,r)$, $\sigma=\sigma(t,x,r)$ are Borel functions of $(t,x,r)\in \mathbb{R}^+\times [0,1]\times \mathbb{R}$ satisfying the following conditions
\begin{description}
  \item[(H1)] There exists a constant $K>0$ such that for all $(t,x,r)\in[0,T]\times[0,1]\times \mathbb{R}$, we have
      \[
      \sup_{t\in[0,T]}\sup_{x\in[0,1]}|b(t,x,r)|\leq K(1+|r|).
      \]
  \item[(H2)] The function $g$ is of the form $g=g_1+g_2$, where $g_1$ and $g_2$ are Borel functions satisfying
  \[
  |g_1(t,x,r)|\leq K(1+|r|),\quad  |g_2(t,x,r)|\leq K(1+|r|^2).
  \]
  \item[(H3)] $\sigma$ is bounded and for every $T\geq0$, there exists a constant $L$ such that for all $(t,x,p,q)\in[0,T]\times[0,1]\times \mathbb{R}^2$, we have
      \[
      |\sigma(t,x,p)-\sigma(t,x,q)|\leq L|p-q|.
      \]
      Furthermore, $b$ and $g$ are locally Lipschitz with linearly growing Lipschitz constant, i.e.,
      \begin{eqnarray*}
      |b(t,x,p)-b(t,x,q)|\leq L(1+|p|+|q|)|p-q|,\\
      |g(t,x,p)-g(t,x,q)|\leq L(1+|p|+|q|)|p-q|.
      \end{eqnarray*}
\end{description}
\begin{dfn}\label{dfn-1}
 A random field $u$ is a solution to (\ref{e-1}) if $u=\{u(t,x), t\in \mathbb{R}^+, x\in [0,1]\}$ is an $L^2([0,1])-$valued continuous $\mathcal{F}_t-$adapted random field with initial value $f\in L^2([0,1])$ and satisfying for all $t\geq 0$, $\phi\in C^{2}([0,1])$ with $\phi(0)=0$, $\phi(1)=0$,
\begin{eqnarray}\notag
&&\int^1_0u(t,x)\phi(x)dx=\int^1_0f(x)\phi(x)dx+\int^t_0\int^1_0 u(s,x)\frac{\partial^2 \phi(x)}{\partial x^2}dxds
+\int^t_0\int^1_0b(s,x, u(s,x))\phi(x)dxds\\
\label{e-2}
&& \quad -\int^t_0\int^1_0g(s,x,u(s,x))\frac{\partial \phi(x)}{\partial x}dxds+\int^t_0\int^1_0\sigma(s,x,u(s,x))\phi(x)W(dxds), \ P-a.s.
\end{eqnarray}

\end{dfn}

The existence and uniqueness of the solution of (\ref{e-1}) is established in \cite{G98}. We recall it here.
\begin{thm}\label{thm-1}
Under assumptions (H1)-(H3), there exists a unique solution $u$ in the sense of Definition \ref{dfn-1}.
\end{thm}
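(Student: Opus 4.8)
The plan is to work with the \emph{mild} formulation of (\ref{e-1}) and to construct the solution by truncation. Writing $S_t\psi(x)=\int_0^1 G_t(x,y)\psi(y)\,dy$ for the Dirichlet heat semigroup and integrating the divergence term $\partial g/\partial x$ by parts against the Green function (using $G_{t-s}(x,0)=G_{t-s}(x,1)=0$), a random field $u$ is a solution in the sense of Definition~\ref{dfn-1} if and only if, $P$-a.s.,
\begin{eqnarray*}
u(t,x)&=&\int_0^1 G_t(x,y)f(y)\,dy+\int_0^t\!\!\int_0^1 G_{t-s}(x,y)\,b(s,y,u(s,y))\,dy\,ds\\
&&-\int_0^t\!\!\int_0^1 \partial_y G_{t-s}(x,y)\,g(s,y,u(s,y))\,dy\,ds+\int_0^t\!\!\int_0^1 G_{t-s}(x,y)\,\sigma(s,y,u(s,y))\,W(dy\,ds).
\end{eqnarray*}
The estimates used throughout are the classical kernel bounds $\big(\int_0^1|G_t(x,y)|^p\,dy\big)^{1/p}\le C\,t^{-\frac12(1-1/p)}$ and $\big(\int_0^1|\partial_y G_t(x,y)|^p\,dy\big)^{1/p}\le C\,t^{-\frac12(2-1/p)}$; the point is that, although $\partial_y G$ is singular, the relevant $L^p_x$-norm $\sim t^{-\frac12(2-1/p)}$ is still integrable in $t$ for the needed exponents (e.g. $p=2$ gives $t^{-3/4}$), which — paired with $|g(s,\cdot,u(s))|_1\le K(1+|u(s)|_H^2)$ from (H2) — accommodates the divergence-form, at most quadratically growing drift $\partial g/\partial x$ in $C([0,T];H)$. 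Continuity of the stochastic convolution as an $H$-valued (in fact $L^q([0,1])$-valued) process with moments of all orders follows from the factorization method together with the Burkholder--Davis--Gundy inequality, using the boundedness of $\sigma$ from (H3). Equivalence of this mild form with the weak form (\ref{e-2}) is routine.

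To handle the merely locally Lipschitz, quadratically growing $b,g$, I would truncate: fix $n\in\N$, pick a smooth cutoff $\chi_n:\R_+\to[0,1]$ with $\chi_n\equiv1$ on $[0,n]$ and $\chi_n\equiv0$ on $[n+1,\infty)$, and switch the coefficients off once $|u(s)|_H$ — or a slightly stronger path norm, chosen so that the resulting Nemytskii maps become globally Lipschitz and of linear growth on the relevant space — exceeds $n$. For the truncated mild equation a Banach fixed point argument in $C([0,T];L^p(\Omega;H))$ with a norm weighted by $e^{-\lambda t}$ applies, estimating the two deterministic convolutions by the $G$- and $\partial_y G$-bounds above and the stochastic convolution by Burkholder--Davis--Gundy and factorization. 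This yields, for each $n$, a unique global solution $u_n\in C([0,T];H)$ of the truncated problem. Setting $\tau_n=\inf\{t\ge0:|u_n(t)|_H\ge n\}$, uniqueness for the truncated equations gives $u_n=u_m$ on $[0,\tau_n\wedge\tau_m]$ whenever $m\ge n$; hence the $\tau_n$ are nondecreasing and $u:=u_n$ on $[0,\tau_n)$ is a well-defined solution of (\ref{e-1}) on $[0,\tau_\infty)$, $\tau_\infty:=\lim_n\tau_n$.

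The heart of the matter is to prove $\tau_\infty=\infty$ $P$-a.s., i.e. an a priori bound ruling out explosion, and this is where I expect the real difficulty to lie. Two features of (\ref{e-1}) make it delicate: the space--time white noise is not trace class, so one cannot simply apply It\^{o}'s formula to $|u(t)|_H^2$ (the correction term would be infinite) and every estimate must pass through the mild formula; and the quadratic growth of $g$ turns a naive Gronwall estimate into a quadratic one, which by itself admits finite-time blow-up. The way through is to subtract the stochastic convolution, $v:=u-Z$, which pathwise solves a deterministic parabolic equation with random coefficients, and to bound $v$ by combining moment bounds on $Z$ (and on $|Z(s)|_q$, $q$ large) with an energy estimate exploiting the \emph{divergence form} of the nonlinearity: the Dirichlet boundary condition annihilates the leading boundary contribution of the antiderivative $r\mapsto\int_0^r g(s,y,\rho)\,d\rho$, and what is left can be absorbed into the dissipative term $-|\partial_x v|_H^2$ coming from $A$, with the help of the one-dimensional Gagliardo--Nirenberg inequality (e.g. $|v|_4^4\le C|v|_H^3\,|\partial_x v|_H$). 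Making this licit requires regularizing the equation (Galerkin truncation of $A$, or a Yosida/mollification scheme), proving the bound at the regularized level uniformly, and passing to the limit. Once a bound of the form $\sup_{t\le T}E|u_n(t\wedge\tau_n)|_H^2\le C_T(1+|f|_H^2)$ is in hand uniformly in $n$, one has $n^2\,P(\tau_n\le T)\le C_T(1+|f|_H^2)$, so $P(\tau_\infty\le T)=0$ for every $T$ and $u$ is a global solution.

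For uniqueness, take two solutions $u,\tilde u$ issued from the same $f$, localize by $\theta_n=\inf\{t:|u(t)|_H\vee|\tilde u(t)|_H\ge n\}$, subtract their mild representations on $[0,\theta_n]$ — where the coefficients act as globally Lipschitz maps — and estimate $E\sup_{s\le t\wedge\theta_n}|u(s)-\tilde u(s)|_H^2$ using the same $G$- and $\partial_y G$-bounds, the Lipschitz property of $\sigma$, and Burkholder--Davis--Gundy; because the kernels involved have integrable singularities, a singular Gronwall inequality forces $u=\tilde u$ on $[0,\theta_n]$. Letting $n\to\infty$ — legitimate since, by Definition~\ref{dfn-1}, both solutions have $P$-a.s. bounded $H$-paths on $[0,T]$, so $\theta_n\to\infty$ — yields $u\equiv\tilde u$, which completes the proof.
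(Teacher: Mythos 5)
The paper itself offers no proof of this theorem---it is simply quoted from Gy\"{o}ngy \cite{G98}---and your proposal is essentially a reconstruction of Gy\"{o}ngy's argument: mild formulation with the $G$ and $\partial_y G$ kernel bounds, truncation by a cutoff of $|u(s)|_H$ and a fixed-point argument for the truncated equations (note the Nemytskii maps become globally Lipschitz only as maps $L^2\to L^1$, which is exactly what the $(t-s)^{-3/4}$ bound on $|\partial_yG_{t-s}(x,\cdot)|_2$ is there to absorb), subtraction of the stochastic convolution and a divergence-structure energy estimate for non-explosion, and localization for uniqueness---i.e., the same machinery the paper itself reuses in Sections 3--4 via $\kappa_R(|u^R(t)|_H^2)$, $v^R=u^R-\eta^R$ and the bound (\ref{eqqq-1}). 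The one imprecise detail is the claimed uniform moment bound $\sup_{t\le T}E|u_n(t\wedge\tau_n)|_H^2\le C_T(1+|f|_H^2)$: the pathwise a priori estimate carries a factor $\exp\bigl(K(1+|\eta^{n,\ast}|^2)t\bigr)$ and $\eta^{n,\ast}$ has Gaussian tails, so this expectation need not be finite for all $T$; the standard fix (as in \cite{G98} and in Lemma 4.1 of the paper) is to bound $E\sup_{t\le T}\log|u_n(t)|_H^2$ instead, which still yields $P(\tau_n\le T)\le C_T(1+|f|_H^2)/\log n\to 0$ and hence global existence, so the architecture of your proof is unaffected.
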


\begin{remark}
Referring to Proposition 3.5 in \cite{G98}, under conditions in Theorem \ref{thm-1}, (\ref{e-2}) is equivalent to the following form. For all $t\geq 0$ and almost surely $\omega\in \Omega$,
 \begin{eqnarray}\notag
&&u(t,x)=\int^1_0 G_t(x,y)f(y)dy +\int^t_0\int^1_0 G_{t-s}(x,y)b(s,y,u(s,y))dyds\\
\label{e-3}
&&\quad \quad  -\int^t_0\int^1_0 \partial_y G_{t-s}(x,y)g(s,y,u(s,y))dyds+\int^t_0\int^1_0G_{t-s}(x,y)\sigma(s,y,u(s,y))W(dyds)
\end{eqnarray}
for almost every $x\in [0,1]$.
\end{remark}
\subsection{A lemma}
Define the linear operator $J$ by
\begin{eqnarray}\label{e-15}
J(v)(t,x)=\int^t_0\int^1_0H(r,t;x,y)v(r,y)dydr, \ t\in [0,T],\ x\in[0,1]
\end{eqnarray}
for every $v\in L^{\infty}([0,T];L^1([0,1]))$.

Referring to \cite{G98}, we have the following heat kernel estimate, which is very crucial to our proof.
\begin{lemma}
Let $J$ is defined by $H(s,t;x,y)=G_{t-s}(x,y)$ or by $H(s,t;x,y)=\frac{\partial G_{t-s}(x,y)}{\partial y}$ in (\ref{e-15}). Let $\rho\in[1,\infty]$, $q\in[1,\rho)$ and set $\kappa=1+\frac{1}{\rho}-\frac{1}{q}$. Then $J$ is a bounded linear operator from $L^{\gamma}([0,T];L^q([0,1]))$ into $C([0,T];L^{\rho}([0,1]))$ for $\gamma>2\kappa^{-1}$. Moreover, for any $T\geq 0$, there are constants $C_1$, $C_2$ such that
\begin{eqnarray}\label{e-16}
|J(v)(t,\cdot)|_{\rho}\leq C_1\int^t_0(t-s)^{\frac{\kappa}{2}-1}|v(s,\cdot)|_qds\leq C_2t^{\frac{\kappa}{2}-\frac{1}{\gamma}}\Big(\int^t_0|v(s,\cdot)|^{\gamma}_qds\Big)^{\frac{1}{\gamma}}.
\end{eqnarray}
\end{lemma}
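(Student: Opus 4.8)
The plan is to prove the heat-kernel smoothing estimate \eqref{e-16} by a direct computation that reduces everything to scalar convolution inequalities, and then to upgrade pointwise-in-time bounds to continuity in time. First I would recall the two standard Gaussian-type bounds for the Dirichlet heat kernel on $[0,1]$: there are constants $c, C>0$ with
\begin{eqnarray*}
0\le G_t(x,y)\le C t^{-1/2}\exp\!\Big(-\frac{c|x-y|^2}{t}\Big),\qquad
\Big|\frac{\partial G_t(x,y)}{\partial y}\Big|\le C t^{-1}\exp\!\Big(-\frac{c|x-y|^2}{t}\Big),
\end{eqnarray*}
for $0<t\le T$. From these one reads off, for fixed $t$, the spatial $L^p$-norms of the kernels: $|G_t(x,\cdot)|_{L^p_y}\le C t^{(1/p-1)/2}$ and $|\partial_y G_t(x,\cdot)|_{L^p_y}\le C t^{1/(2p)-1}$, uniformly in $x$ (and symmetrically in $x$). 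Writing $\theta$ for the exponent so that $|H(s,t;x,\cdot)|_{L^{p}_y}\le C(t-s)^{\theta}$ with $p$ the conjugate exponent to be chosen by Hölder, one checks that in both cases $\theta = \frac{\kappa}{2}-1$ when $p' $ is matched to $q$ via $\tfrac1{p'} = 1 - (\tfrac1q-\tfrac1\rho)$; the gain $1/\rho-1/q$ in integrability is exactly what produces the $\kappa$ in the exponent.

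The core estimate then goes as follows. Fix $t$ and $x$. Apply Minkowski's integral inequality in $L^{\rho}_x$ to pull the $L^\rho$ norm inside the time and space integrals, then Hölder in the $y$ variable with exponents matched as above:
\begin{eqnarray*}
|J(v)(t,\cdot)|_\rho
\le \int_0^t \Big| \int_0^1 H(s,t;\cdot,y)\,v(s,y)\,dy\Big|_{\rho}\,ds
\le \int_0^t \big| H(s,t;\cdot,y)\big|_{L^{p'}_y}\cdot|v(s,\cdot)|_q\,ds
\le C_1\int_0^t (t-s)^{\frac\kappa2-1}|v(s,\cdot)|_q\,ds,
\end{eqnarray*}
where the middle step uses the Gaussian bounds (integrating a Gaussian against an $L^{p'}$-function and then taking $L^\rho$ in $x$ reduces to a Young-type convolution inequality on $[0,1]$, whose constant is finite because $[0,1]$ has finite measure). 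This is the first inequality in \eqref{e-16}. The second inequality is then pure Hölder in time: $\int_0^t (t-s)^{\kappa/2-1}|v(s)|_q\,ds \le \big(\int_0^t (t-s)^{(\kappa/2-1)\gamma'}ds\big)^{1/\gamma'}\big(\int_0^t |v(s)|_q^\gamma ds\big)^{1/\gamma}$, and the first factor is finite and equals a constant times $t^{\kappa/2-1/\gamma}$ precisely when $(\kappa/2-1)\gamma' > -1$, i.e.\ when $\gamma > 2\kappa^{-1}$; this is where the hypothesis on $\gamma$ is consumed, and it simultaneously forces $\kappa>0$ so that the time exponent $\kappa/2-1/\gamma$ is well-defined.

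It remains to show $J(v)\in C([0,T];L^\rho([0,1]))$, i.e.\ continuity in $t$ with values in $L^\rho_x$; boundedness of the operator is immediate from the displayed estimate. For $0\le t_1<t_2\le T$ I would split $J(v)(t_2)-J(v)(t_1)$ into the ``new mass'' piece $\int_{t_1}^{t_2}\int_0^1 H(s,t_2;x,y)v(s,y)\,dy\,ds$, controlled by $C\int_{t_1}^{t_2}(t_2-s)^{\kappa/2-1}|v(s)|_q\,ds\to 0$ as $t_2-t_1\to0$ by absolute continuity of the integral (using $v\in L^\gamma_tL^q_x$ and integrability of $(t_2-s)^{\kappa/2-1}$), and the ``old mass'' piece $\int_0^{t_1}\int_0^1\big(H(s,t_2;x,y)-H(s,t_1;x,y)\big)v(s,y)\,dy\,ds$, which one handles by the semigroup/heat-equation property \eqref{e-29}--\eqref{e-28}: $H(s,t_2;\cdot,y)$ is obtained from $H(s,t_1;\cdot,y)$ by applying the heat semigroup for time $t_2-t_1$, which is strongly continuous on $L^\rho$, so this term tends to $0$ as well (with a dominated-convergence argument to interchange limit and the $s$-integral, dominating by $2C(t_1-s)^{\kappa/2-1}|v(s)|_q$). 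I expect the main obstacle to be this last point: making the continuity argument uniform in $x$ and rigorous at $t=0$ and across the two types of kernel $H$ simultaneously — in particular verifying strong $L^\rho$-continuity of the semigroup generated by the Dirichlet Laplacian and its derivative kernel, and checking the dominating functions are genuinely integrable — whereas the quantitative bound \eqref{e-16} itself is a routine, if slightly fiddly, bookkeeping of exponents.
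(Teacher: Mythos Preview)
The paper does not prove this lemma at all: it is quoted verbatim from Gy\"{o}ngy \cite{G98} with the phrase ``Referring to \cite{G98}, we have the following heat kernel estimate'' and no argument is given. Your proposal, by contrast, supplies an actual proof sketch, and it is essentially the standard (and correct) one: Gaussian upper bounds on $G_t$ and $\partial_y G_t$, Young's convolution inequality in the spatial variable to pass from $L^q$ to $L^\rho$, and then H\"older in time for the second inequality (the condition $\gamma>2\kappa^{-1}$ is exactly what makes $(t-s)^{(\kappa/2-1)\gamma'}$ integrable near $s=t$). The continuity-in-time argument via the semigroup property and dominated convergence is also fine.

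Two small points of precision. First, for the plain kernel $H=G_{t-s}$ the Young/Gaussian computation actually yields the \emph{better} exponent $\kappa/2-1/2$, not $\kappa/2-1$; the exponent $\kappa/2-1$ is sharp only for the derivative kernel $\partial_y G_{t-s}$, and the lemma as stated simply records the worse of the two (which of course dominates on $[0,T]$). Your sentence ``in both cases $\theta=\kappa/2-1$'' is therefore slightly inaccurate, though harmless. Second, the spatial step is cleaner if you do not try to phrase it as ``H\"older in $y$ followed by $L^\rho_x$'': bound $|H(s,t;x,y)|$ by a translation-invariant Gaussian $K_{t-s}(x-y)$ and apply Young's inequality $|K_{t-s}*v(s,\cdot)|_\rho\le |K_{t-s}|_p\,|v(s,\cdot)|_q$ directly with $1/p=1+1/\rho-1/q=\kappa$; then $|K_t|_p\le C t^{\kappa/2-1/2}$ for $G$ and $|K_t|_p\le C t^{\kappa/2-1}$ for $\partial_y G$, and the rest follows.
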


\subsection{Statement of the main result}
In order to state the main result, we introduce some relevant notations and definitions.

Denote by $\mathcal{B}(H)$ the $\sigma-$field of all Borel subsets of $H$ and by $\mathcal{M}(H)$ the set of all probability measures defined on $(H,\mathcal{B}(H) )$.
Let $u(t,x,f)$ be the solution of (\ref{e-1}) and $P_t(f, \cdot)$ be the corresponding transition function
\[
P_t(f, \Gamma)=P(u(\cdot, t, f)\in \Gamma), \quad \Gamma \in \mathcal{B}(H), \ t>0,
\]
where $f$ is the initial condition.

For $\mu\in \mathcal{M}(H)$, we set
\[
P^*_t \mu(\Gamma)=\int_{H}P_t (x, \Gamma) \mu(dx)
\]
for $t\geq 0$ and $\Gamma \in \mathcal{B}(H)$.
\begin{dfn}
A probability measure $\mu\in \mathcal{M}(H)$ is said to be invariant or stationary with respect to $P_t$,  if and only if $P^*_t \mu=\mu$ for each $t\geq 0$.
\end{dfn}
Denote by ${B}_b(H)$ the space of all bounded measurable functions on $H$.
The semigroup $P_t$ associated with the solution $u(t,x,f)$ to (\ref{e-1}) is defined by
\[
P_t \psi(f)=E[\psi(u(t,f))],\ \psi\in {B}_b(H).
\]
\begin{dfn}
 $P_t$ is strong Feller, if $P_t$ maps $ B_b(H)$ into $ C_b(H)$ for $t>0$.
\end{dfn}
To obtain the strong Feller property of $P_t$, we need an additional condition:
\begin{description}
  \item[(H4)] There exists strictly positive constants $k_1$, $k_2$ such that $k_1\leq |\sigma(\cdot)|\leq k_2$.
\end{description}

\begin{thm}\label{thm-5}
Let $(P_t)_{t\geq 0}$ be the semigroup associated with the solution to (\ref{e-1}). Under assumptions (H1)-(H4), $(P_t)_{t\geq 0}$ is ergodic.
\end{thm}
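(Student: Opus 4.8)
The plan is to invoke Doob's theorem: a Markov semigroup that is simultaneously strong Feller and irreducible admits at most one invariant measure, and this uniqueness together with the existence of an invariant measure is precisely the ergodicity asserted. Thus the argument has three ingredients — existence via Krylov-Bogolyubov, the strong Feller property, and irreducibility — and (H4) enters only through the last two, as the non-degeneracy of the noise.

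\emph{Existence.} Starting from the mild formulation (\ref{e-3}), I would combine the heat-kernel bounds (\ref{e-16}) with (H1)--(H3) and a stopping-time/Gronwall argument to produce moment estimates for $u(t,f)$ that are uniform in $t\geq 0$ and controlled in a norm stronger than $|\cdot|_H$ — e.g. an $L^p([0,1])$-norm with $p>2$, or a small fractional-Sobolev or H\"older norm, exploiting the extra regularity the semigroup gains against the white noise. Since such a space embeds compactly into $H=L^2([0,1])$, the time averages $R_T(f,\cdot)=\frac1T\int_0^T P_t(f,\cdot)\,dt$ form a tight family on $H$, and by the Krylov-Bogolyubov theorem any of its weak limit points is an invariant measure. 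The subtle point is the quadratic term $g_2$: the a priori bounds must be closed without a global Lipschitz constant, which is where the pathwise estimates of \cite{G98} come in.

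\emph{Strong Feller.} For each $R>0$ I would introduce the truncated equation obtained by multiplying $b$ and $g$ by a smooth cutoff $\chi_R(|u|_H)$ supported in $\{|u|_H\leq 2R\}$; its coefficients are then globally Lipschitz and the noise is non-degenerate by (H4). The Bismut-Elworthy-Li formula then applies to the truncated semigroup $P_t^R$ and yields a bound of the type
\[
\bigl|\,D\bigl(P_t^R\psi\bigr)(f)[h]\,\bigr|\;\leq\;\frac{C_R}{\sqrt{t}}\;\Bigl(\sup_{H}|\psi|\Bigr)\,|h|_H ,\qquad h\in H,
\]
so $P_t^R$ maps $B_b(H)$ into $C_b(H)$. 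To descend to $P_t$, note that the solution of (\ref{e-1}) and that of the $R$-truncated equation coincide up to the first exit of $|u|_H$ from the ball of radius $R$; invoking the weak-strong uniqueness principle of \cite{F-M} one compares $P_t\psi$ with $P_t^R\psi$ on that event, the discrepancy being governed by $P(\sup_{s\leq t}|u(s,f)|_H>R)$, which tends to $0$ as $R\to\infty$, locally uniformly in $f$, by the moment bounds from the existence step. Hence $P_t\psi\in C_b(H)$.

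\emph{Irreducibility.} Working once more with the $R$-truncated equation, I would reduce the claim to a control problem: for a target ball $B(u_1,\varepsilon)\subset H$ and an initial datum $f$ one must show $P\bigl(u^R(t,f)\in B(u_1,\varepsilon)\bigr)>0$. Picking a smooth path steering $f$ toward $u_1$, subtracting it from the equation and performing energy estimates controls the resulting drift, after which Girsanov's theorem — applicable because $|\sigma|\geq k_1>0$ — gives a strictly positive lower bound for this probability. Since $u^R(t,f)\to u(t,f)$ in probability as $R\to\infty$, the positivity passes to $P_t(f,\cdot)$, giving irreducibility. Combining the three parts, Doob's theorem produces a unique invariant measure, so $(P_t)_{t\geq0}$ is ergodic. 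The step I expect to be the main obstacle is the strong Feller property: rigorously justifying the Bismut-Elworthy-Li formula for the truncated, space-time white-noise driven equation with coefficients that are only measurable (locally Lipschitz after truncation), and then quantifying $C_R$ and the exit probabilities well enough that the weak-strong comparison survives the limit $R\to\infty$. Irreducibility is comparatively routine once the Girsanov change of measure is set up, and existence reduces to the now-standard tightness argument once the uniform-in-time moment bounds are in hand.
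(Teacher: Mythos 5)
Your proposal follows essentially the same route as the paper: existence via Krylov--Bogolyubov, and uniqueness via Doob's theorem after establishing the strong Feller property (truncation, the Bismut--Elworthy--Li formula, and the Flandoli--Romito weak--strong uniqueness principle) and irreducibility (truncation, a control argument, and Girsanov). The only cosmetic difference is in the existence step, where the paper obtains tightness of $\{P_t(f,\cdot):t\ge 1\}$ directly from the tightness of the solution laws in $C([0,1];H)$ (Theorem 4.2 of \cite{FS17}) combined with the Markov property, rather than from uniform-in-time moment bounds in a space compactly embedded in $H$.
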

\begin{proof}
Due to Theorem 3.2.6 in \cite{D-Z}, it suffices to prove the existence and uniqueness of invariant measures for $P_t$. We divide the proof into two parts. In the first part, we  prove the existence of invariant measure (see the following Sect. \ref{sec-2}). In the second part, we establish the uniqueness of invariant measures. According to Khas'minskii and Doob's theorem (see Theorem 4.1.1 and Theorem 4.2.1 in \cite{D-Z}), the uniqueness of invariant measures will be implied by strong Feller property and irreducibility. The proof process of them will be presented in the following Sect. \ref{sec-3}.
\end{proof}

\section{ Existence of Invariant Measures}\label{sec-2}
\begin{thm}\label{thm-2}
Suppose assumptions (H1)-(H3) are in force, then there exists an invariant measure to (\ref{e-1}) on $H$.
\end{thm}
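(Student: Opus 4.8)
The plan is to invoke the Krylov-Bogolyubov theorem. Fix the initial datum $f=0$ and, for $T\ge 1$, form the Ces\`aro averages
\[
\mu_T:=\frac{1}{T}\int_0^T P_t(0,\cdot)\,dt\ \in\ \mathcal{M}(H).
\]
Since the solution map $f\mapsto u(t,f)$ is continuous from $H$ into $L^2(\Omega;H)$ (part of the well-posedness established in \cite{G98}), the semigroup $(P_t)_{t\ge0}$ is Feller on $H$, so by the Krylov-Bogolyubov theorem (see \cite{D-Z}) it suffices to show that $\{\mu_T\}_{T\ge1}$ is tight on $H$: any weak limit point $\mu$ of a sequence $\mu_{T_n}$ with $T_n\to\infty$ is then invariant for $(P_t)_{t\ge0}$, which is the assertion.

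For the tightness I would use that $A$ has compact resolvent, so the embedding $D((-A)^{\alpha/2})\hookrightarrow H$ is compact for every $\alpha>0$; it is therefore enough to exhibit, for some $\alpha\in(0,\tfrac12)$ (the range in which a space-time white noise still yields finite moments), a constant $C$ independent of $T$ with
\[
\sup_{T\ge1}\ \frac{1}{T}\int_0^T E\big|(-A)^{\alpha/2}u(t,0)\big|_H^2\,dt\ \le\ C ,
\]
since by Chebyshev's inequality this gives $\mu_T\big(\{\,h\in H:\,|(-A)^{\alpha/2}h|_H\le R\,\}\big)\ge 1-C/R^2$ uniformly in $T$, and the indicated sublevel set is relatively compact in $H$. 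To produce such a bound I would start from the mild formulation (\ref{e-3}), apply $(-A)^{\alpha/2}$, and estimate its four terms separately, using the heat-kernel and regularity estimates (\ref{e-16}) --- in the refined form that retains the exponential factor $e^{-\lambda_1\tau}$, where $\lambda_1>0$ is the first Dirichlet eigenvalue of $-A$; this decay is the dissipativity absent from a pure heat kernel on $\R$, and it is exactly what makes the resulting bounds uniform in $t$. The term $G_t f$ vanishes because $f=0$; the stochastic convolution $\int_0^t G_{t-s}\Sigma(u(s))\,dW(s)$ is handled by the factorization method together with Burkholder's inequality and the boundedness of $\sigma$ from (H3), which yields $H^\alpha$-moments bounded uniformly in $t$; the drift term is controlled by the linear growth (H1); and the divergence-form term, whose kernel $\partial_y G$ carries one extra spatial derivative, is controlled by the quadratic growth (H2) once one knows $\sup_{s\ge0}E|u(s,0)|_{L^4}^2<\infty$ (equivalently, a uniform $H^{1/4}$-moment bound, by one-dimensional Sobolev embedding).

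The step I expect to be the main obstacle is thus the uniform-in-time a priori moment estimate: producing constants with $\sup_{t\ge0}E|u(t,0)|_H^2<\infty$ and $\sup_{t\ge0}E|u(t,0)|_{L^4}^4<\infty$ (or whatever $L^\rho$-moments the divergence-form term ultimately requires). The difficulty is structural: the space-time white noise forces $u(t,\cdot)$ to lie only in $H^{1/2-\vep}$, so one cannot apply It\^o's formula directly to $|u(t)|_H^2$ (its It\^o correction, a trace, is infinite), while $\partial_x g$ grows only quadratically. I would get around this by the classical splitting $u=v+Z$, where $Z(t)=\int_0^t G_{t-s}\Sigma(u(s))\,dW(s)$ is the stochastic convolution --- which, $\sigma$ being bounded, has $L^\rho$- and $H^{1/2-\vep}$-moments bounded uniformly in $t$ by the same heat-kernel estimates with exponential weight --- and $v:=u-Z$ solves, pathwise, the random PDE
\[
\partial_t v = Av + b(t,\cdot,v+Z) + \partial_x g(t,\cdot,v+Z),\qquad v(0)=0 .
\]
For $v$ one runs a (pathwise) energy estimate on $|v(t)|_H^2$ --- which can be justified by a Galerkin approximation --- in which the Laplacian contributes the dissipation $-2|\partial_x v|_H^2\le-2\lambda_1|v|_H^2$ (Poincar\'e); the divergence form of the $g$-term means that, after integration by parts, its leading quadratic part reduces to the vanishing diagonal contribution $\int_0^1 v^2\,\partial_x v\,dx=0$ plus cross terms in $Z$, which are absorbed by Young's inequality and one-dimensional interpolation (Gagliardo-Nirenberg, e.g.\ $|w|_{L^4}^4\le C|w|_H^3\,|\partial_x w|_H$) together with the uniform moments of $Z$; and the remaining lower-order terms are bounded using (H1)-(H2). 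A Gronwall argument then gives $\sup_{t\ge0}E|v(t)|_H^2<\infty$ --- here the dissipation constant $\lambda_1$ must dominate the growth constants entering the drift and the lower-order part of $g$ --- and the higher $L^\rho$-moments follow analogously, or by bootstrapping through the $L^\rho$-smoothing in (\ref{e-16}). Combining these with $u=v+Z$ gives the uniform moments needed above; feeding them into the tightness estimate and invoking Krylov-Bogolyubov completes the proof.
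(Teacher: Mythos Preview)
Your route via Ces\`aro averages and fractional Sobolev compactness is \emph{different} from the paper's, and in its present form it has a genuine gap at exactly the point you flag yourself.

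\textbf{What the paper does.} The paper does not attempt any uniform-in-$t$ moment or $H^\alpha$ bound. Instead it uses the Markov property to write, for $t\ge1$, $P(u(t,f)\in K)=P\big(u(1,u(t-1,f))\in K\big)$, and then invokes the tightness of the law of $u(\cdot,\cdot,g)$ in the path space $C([0,1];H)$ (Theorem~4.2 in \cite{FS17}) to produce a compact $K'\subset C([0,1];H)$; evaluation at time $1$ is continuous, so $K=\Phi_1(K')$ is compact in $H$. Thus the whole burden is shifted to a tightness statement on a \emph{fixed finite} time window, with no need for estimates that are uniform as $t\to\infty$.

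\textbf{Where your argument breaks.} Your scheme hinges on a uniform-in-time bound of the type $\sup_{t\ge0}E|u(t,0)|_H^2<\infty$ (and higher $L^\rho$ moments), obtained from a Gronwall argument for $v=u-Z$. Two things go wrong under (H1)--(H3) alone. First, the cancellation $\int_0^1 v^2\,\partial_x v\,dx=0$ that you invoke is the Burgers identity; for a general $g_2=g_2(t,x,r)$ allowed by (H2) one only gets $\int_0^1 g_2(t,x,v)\,\partial_x v\,dx=-\int_0^1 (\partial_x G_2)(t,x,v)\,dx$ with $G_2$ a primitive in $r$, and this residual term is cubic in $v$ (e.g.\ for $g_2=h(x)r^2$), which cannot be absorbed by the dissipation and prevents a uniform bound. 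Second, even when that cancellation \emph{is} available, the cross terms with $Z$ feed a coefficient of order $\|Z\|_\infty^2$ into the $|v|_H^2$ term, and your own caveat ``$\lambda_1$ must dominate the growth constants'' is not a consequence of (H1)--(H3); indeed the very estimate the paper quotes from \cite{G98},
\[
|v^R(t)|_H^2\le\big[|f|_H^2+Kt(1+|\eta^{R,*}|^4)\big]\exp\!\big(K(1+|\eta^{R,*}|^2)t\big),
\]
is exponential in $t$, so no uniform-in-time bound is available at this level of generality. Consequently your Ces\`aro tightness criterion cannot be verified from (H1)--(H3) as stated. The paper's Markov-property reduction to a fixed time window is precisely what circumvents this obstruction.
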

\begin{proof}
According to the Krylov-Bogolyubov theorem (see \cite{D-Z}), if the family $\{P_t(f,\cdot); t\geq 1\}$ is tight, then
there exists an invariant measure for (\ref{e-1}). So we need to show that for any $\varepsilon>0$, there is a compact set $K\subset H$ such that
\[
P(u(t)\in K)\geq 1-\varepsilon \quad \forall t\geq 1,
\]
where $u(t)=u(t,f)=u(\cdot,t,f)$. For any $t\geq 1$, by the Markov property, we have
\[
P(u(t)\in K)=P\Big(u(1, u(t-1))\in K\Big).
\]
Hence, it is enough to show that $P\Big(u(1, u(t-1))\in K\Big)\geq 1-\varepsilon$ for all $t\geq 1$.

Define $\Phi_1: C([0,1];H)\rightarrow H$ by $u(1, \cdot, u(t-1))=\Phi_1(u(\cdot,\cdot, u(t-1)))$. Clearly, $\Phi_1$ is a continuous mapping. Thus, $K=\Phi_1(K')$ is a compact subset in $H$, if $K'$ is a compact subset in $C([0,1];H)$.
Recall Theorem 4.2 in \cite{FS17}, the tightness of $u(t,x,f)$ in $C([0,T];H)$ is obtained for any $f\in H$ and $T>0$. Due to $u(t-1)\in H$, then for any $\varepsilon>0$, there exists a compact subset $K'\subset C([0,1];H)$ such that
\[
P\Big(u(\cdot,\cdot, u(t-1))\in K'\Big)\geq 1-\varepsilon.
\]
Thus,
\begin{eqnarray*}
P\Big(u(1, \cdot, u(t-1))\in K\Big)&=&P\Big(u(1, \cdot, u(t-1))\in \Phi_1(K')\Big)\\
&\geq& P\Big(u(\cdot,\cdot, u(t-1))\in K'\Big)\\
&\geq & 1-\varepsilon,
\end{eqnarray*}
which implies the result.

\end{proof}

\section{Uniqueness of Invariant Measures}\label{sec-3}
\subsection{Strong Feller Property}
In this part, we aim to prove the following theorem.
\begin{thm}\label{thm-3}
Under assumptions (H1)-(H4), for any $t>0$, the semigroup $P_t$ is strong Feller.
\end{thm}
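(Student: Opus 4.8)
The plan is to prove the strong Feller property via a truncation argument combined with the Bismut--Elworthy--Li (BEL) formula, following the strategy sketched in the introduction. First I would introduce, for each $N>0$, a smooth cutoff $\chi_N:\R\to[0,1]$ with $\chi_N\equiv 1$ on $\{|r|\le N\}$ and $\chi_N\equiv 0$ on $\{|r|\ge N+1\}$, and replace the coefficients $b,g$ by the globally Lipschitz, bounded truncations $b_N(t,x,r)=b(t,x,r)\chi_N(|r|_H)$-type modifications --- more precisely one truncates in the $H$-norm of the solution, setting $b_N(t,x,u)=b(t,x,u)\,\chi_N(|u|_H)$ and similarly for $g_N$, so that the truncated equation
\begin{eqnarray*}
du_N=Au_N\,dt+b_N(t,\cdot,u_N)\,dt+\partial_x g_N(t,\cdot,u_N)\,dt+\Sigma(u_N)\,dW(t)
\end{eqnarray*}
has globally Lipschitz and bounded drift on $H$. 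Denote by $P^N_t$ the corresponding semigroup. Because the solution map for \eqref{e-1} is continuous and the truncated equation agrees with \eqref{e-1} up to the stopping time $\tau_N=\inf\{t: |u(t)|_H\ge N\}$, pathwise uniqueness gives $u=u_N$ on $[0,\tau_N]$, and $\tau_N\uparrow\infty$ a.s. as $N\to\infty$.

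Next I would establish the strong Feller property of each $P^N_t$. Here assumption (H4) gives uniform ellipticity of $\Sigma$, so $\Sigma(u)$ is boundedly invertible on $H$, and the BEL formula applies: for $\psi\in B_b(H)$, $t>0$, $h\in H$,
\begin{eqnarray*}
\langle DP^N_t\psi(f),h\rangle=\frac1t\,E\Big[\psi(u_N(t,f))\int_0^t\langle \Sigma(u_N(s,f))^{-1}D u_N(s,f)h,\,dW(s)\rangle\Big],
\end{eqnarray*}
where $Du_N(s,f)h=:\eta^h(s)$ solves the (linearized) variational equation. The bulk of the work is the a priori bound: one must show $E\int_0^t|\Sigma(u_N)^{-1}\eta^h(s)|_H^2\,ds\le C(N,t)|h|_H^2$. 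This requires controlling $\eta^h$ in $H$ via Gr\"onwall, using that $b_N,g_N$ have linearly-growing (hence, on the truncation set, bounded) Lipschitz constants in the sense of (H3); the $\partial_x g_N$ term is the delicate one and is handled exactly with the heat-kernel smoothing Lemma of Section 2.3 (the operator $J$ with $H(s,t;x,y)=\partial_y G_{t-s}(x,y)$), which is why the statement is phrased for $t>0$ --- the $(t-s)^{\kappa/2-1}$ singularity is integrable. This yields $|DP^N_t\psi(f)|\le C(N,t)\,t^{-1/2}\|\psi\|_\infty$, hence $P^N_t\psi\in C_b(H)$, i.e. $P^N_t$ is strong Feller.

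Finally I would pass from $P^N_t$ to $P_t$. Fix $t>0$ and $\psi\in B_b(H)$; I want $f\mapsto E[\psi(u(t,f))]$ continuous. Write
\begin{eqnarray*}
|P_t\psi(f)-P^N_t\psi(f)|\le 2\|\psi\|_\infty\, P\big(\tau_N(f)\le t\big),
\end{eqnarray*}
and the point is that $P(\tau_N(f)\le t)$ is small, uniformly for $f$ in a bounded ball of $H$, because of the uniform energy estimate $E\sup_{s\le t}|u(s,f)|_H^2\le C(t)(1+|f|_H^2)$ coming from (H1)--(H3) (these are the same estimates underlying the tightness used in Section 3). Thus $P^N_t\psi\to P_t\psi$ uniformly on bounded sets; since each $P^N_t\psi$ is continuous and a uniform-on-bounded-sets limit of continuous functions on the metric space $H$ is continuous, $P_t\psi\in C_b(H)$. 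For full rigor at this last step I would invoke the weak--strong uniqueness principle of \cite{F-M} to identify the limit and justify that the truncated solutions indeed converge to the genuine solution of \eqref{e-1} in probability in $C([0,t];H)$. The main obstacle is the second step: obtaining the variational estimate for $\eta^h$ in $H$ in the presence of the spatial-derivative nonlinearity $\partial_x g_N$, which forces one to work in the mild formulation \eqref{e-3} and lean on the heat-kernel Lemma \eqref{e-16} rather than on naive energy methods; the ellipticity hypothesis (H4) is essential precisely so that $\Sigma^{-1}$ appearing in the BEL formula is a bounded operator.
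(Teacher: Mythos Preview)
Your overall strategy---truncate in the $H$-norm, apply the Bismut--Elworthy--Li formula to the truncated equation, then pass to the limit---is exactly the paper's approach. There are, however, two genuine gaps.

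First, you apply BEL directly to the truncated process $u_N$, writing down the variational equation for $\eta^h=Du_N(s,f)h$. But under (H3) the coefficients $b,g$ are only locally Lipschitz (and $\sigma$ Lipschitz), not $C^1$, so the derivative $Du_N$ need not exist and the linearized equation is not well-posed. The paper inserts a second approximation layer: after truncating to $u^R$, it mollifies $b,g,\sigma$ to smooth $b_n,g_n,\sigma_n$, obtains $u^{R,n}$, applies BEL to $u^{R,n}$, and then shows the Lipschitz constant in the resulting estimate $|P^{R,n}_t\psi(f_1)-P^{R,n}_t\psi(f_2)|\le C(R,T)t^{-1/2}\|\psi\|_\infty|f_1-f_2|_H$ is independent of $n$. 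Passing $n\to\infty$ (via $E|u^{R,n}-u^R|_H\to0$) transfers this Lipschitz bound to $P^R_t$. You need this extra mollification step.

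Second, your control of $P(\tau_N(f)\le t)$ relies on the moment bound $E\sup_{s\le t}|u(s,f)|_H^2\le C(t)(1+|f|_H^2)$, but this estimate is \emph{not} available when $g$ has quadratic growth as in (H2). The paper instead decomposes $u^R=v^R+\eta^R$ with $\eta^R$ the stochastic convolution (which has all moments uniformly in $R$) and obtains for $v^R$ the pathwise bound $|v^R(t)|_H^2\le[|f|_H^2+Kt(1+|\eta^{R,\ast}|^4)]\exp\{K(1+|\eta^{R,\ast}|^2)t\}$; the exponential prevents taking $E|\cdot|_H^2$ directly, and one only gets $E\sup_t\log|u^R(t)|_H^2\le K_T(1+|f|_H^2)$, hence $P(\tau_R\le t)\le K_T(1+|f|_H^2)/\log R$. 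This is still uniform on $H$-bounded sets, so your conclusion survives, but the justification must change. The paper then closes via the weak--strong uniqueness criterion of \cite{F-M} (which you mention at the end): verify that $P^R_f(\tau_R<\varepsilon)\to0$ as $\varepsilon\to0$ uniformly for $f$ in small balls, and that $u$ and $u^R$ agree up to $\tau_R$; from $P^R_t$ strong Feller one concludes $P_t$ strong Feller.
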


In \cite{G98}, Gy\"{o}ngy proves the existence and uniqueness of the solution to (\ref{e-1}) by an approximation procedure. Concretely, let $R>1$ be a positive number and consider
\begin{eqnarray}\notag
 \frac{\partial u^R(t,x)}{\partial t}&=&\frac{\partial^2 u^R(t,x)}{\partial x^2}+\kappa_R({|u^R(t)|^2_H})b(t,x,u^R(t,x))\\
 \label{e-5}
&&\ +\frac{\partial }{\partial x}\kappa_R({|u^R(t)|^2_H}) g(t,x, u^R(t,x))+\sigma(t,x,u^R(t,x))\dot{W}(t,x),\\
\label{e-6}
u^R(t,0)&=&u^R(t,1)=0, \\
\label{e-7}
u^R(0,\cdot)&=&f(\cdot)\in H,
\end{eqnarray}
where $\kappa_R=\kappa_R(r)$ is a $C^1(\mathbb{R})$ function such that $\kappa_R(r)=1$ for $|r|\leq R$, $\kappa_R(r)=0$ for $|r|> R+1$ and $\frac{d\kappa_R}{dr}\leq 2$ for all $r\in \mathbb{R}$.

By Proposition 4.7 in \cite{G98}, under conditions (H1)-(H3), for any $R>0$, there exists a unique solution $u^R\in C([0,T];H)$, $P-$a.s.. Define $\tau_R=\inf\{t\geq 0: |u^R(t)|^2_H\geq R\}$.
Notice that $u^S(t)=u^R(t)$ for $S\geq R$ and $t\leq \tau_R$. Therefore, we can set
\begin{eqnarray}\label{eqqq-3}
u(t)=u^R(t), \quad {\rm{if}}\ t\leq \tau_R.
\end{eqnarray}

Denote $P^{R}_t$ be the corresponding semigroup of $u^R(t,x)$, i.e., $P^{R}_t\psi(f)=E[\psi(u^{R}(t,f))]$, for any $\psi\in B_b(H)$. We claim that the following lemma holds.
\begin{lemma}
Under assumptions (H1)-(H3),
for $\psi\in B_b(H)$, we have
\begin{eqnarray}\label{e-11}
|P_t\psi(f)-P^R_t\psi(f)|\leq \frac{K'_T\|\psi\|_{\infty} (1+|f|^2_H)}{\log R},
\end{eqnarray}
where  $\|\psi\|_{\infty}=\sup_{f\in H}|\psi(f)|$ and $K'_T$ is a finite constant independent of $R$.
\end{lemma}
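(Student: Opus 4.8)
The backbone of the argument is the coupling provided by (\ref{eqqq-3}): since $u(s,f)=u^R(s,f)$ for all $s\le\tau_R$, the solution of (\ref{e-1}) and the solution of the truncated equation (\ref{e-5})--(\ref{e-7}) coincide at time $t$ on the event $\{\tau_R\ge t\}$. I would therefore first write
\[
|P_t\psi(f)-P^R_t\psi(f)|=\Big|E\big[\big(\psi(u(t,f))-\psi(u^R(t,f))\big)\mathbf{1}_{\{\tau_R<t\}}\big]\Big|\le 2\|\psi\|_\infty\,P(\tau_R<t),
\]
which reduces the statement to an estimate $P(\tau_R<t)\le C_T(1+|f|^2_H)/\log R$.

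To obtain this, I would note that on $\{\tau_R<t\}$ one has $|u^R(\tau_R)|^2_H\ge R$ by definition of $\tau_R$, and, since $u(\tau_R)=u^R(\tau_R)$, also $\sup_{0\le s\le t}|u(s,f)|^2_H\ge R$; thus $\{\tau_R<t\}\subseteq\{\sup_{0\le s\le t}|u(s,f)|^2_H\ge R\}$. Chebyshev's inequality then yields
\[
P(\tau_R<t)\le P\Big(\sup_{0\le s\le t}|u(s,f)|^2_H\ge R\Big)\le\frac1R\,E\Big[\sup_{0\le s\le t}|u(s,f)|^2_H\Big].
\]
Invoking the a priori moment bound $E[\sup_{0\le s\le T}|u(s,f)|^2_H]\le C_T(1+|f|^2_H)$ and using $\log R\le R$ for $R>1$, I would conclude $P(\tau_R<t)\le C_T(1+|f|^2_H)/\log R$ for all $t\le T$, hence (\ref{e-11}) with $K'_T=2C_T$.

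The one genuinely substantial ingredient — and the step I expect to be the main obstacle — is the a priori bound $E[\sup_{0\le s\le T}|u(s,f)|^2_H]\le C_T(1+|f|^2_H)$, with the constant independent of $R$ and depending only linearly on $|f|^2_H$. Since the noise is space-time white, $\|\Sigma(\cdot)\|_{HS}=+\infty$ and It\^o's formula applied to $|u(t)|^2_H$ is unavailable; one must instead argue from the mild formulation (\ref{e-3}), estimating the deterministic drift contributions by the heat-kernel bound (\ref{e-16}) (e.g.\ with $\rho=2$, $q=1$, so that $\kappa=\tfrac12$ and the kernel singularity $(t-s)^{\kappa/2-1}=(t-s)^{-3/4}$ is integrable) and the stochastic convolution $\int_0^tG_{t-s}\sigma(u(s))\,W(dy\,ds)$ by a factorization argument combined with the Burkholder--Davis--Gundy inequality and (\ref{e-16}), using the boundedness of $\sigma$ from (H3). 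The delicate part is the term carrying the quadratic growth of $g=g_1+g_2$ allowed by (H2): controlling it while keeping the final constant linear in $|f|^2_H$ and uniform in $R$ requires exploiting the precise structure of the nonlinearity (as for the stochastic Burgers equation) together with the smoothing of the Green function, and is essentially the content of the a priori estimates of \cite{G98} that also underlie the tightness result of \cite{FS17} quoted above; in the write-up I would either cite these or reproduce the relevant computation.
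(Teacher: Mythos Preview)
Your reduction to estimating $P(\tau_R<t)$ via the coupling (\ref{eqqq-3}) is exactly the first step the paper takes. The divergence comes after that, and it is a real gap.

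You propose to bound $P(\tau_R<t)$ by $\tfrac1R\,E\big[\sup_{s\le t}|u(s,f)|^2_H\big]$ and then invoke an a priori estimate $E[\sup_{s\le T}|u(s,f)|^2_H]\le C_T(1+|f|^2_H)$. This last estimate is not available under (H1)--(H3), and neither \cite{G98} nor \cite{FS17} supplies it. What \cite{G98} gives is the \emph{pathwise} bound (\ref{eqqq-1}),
\[
|v^R(t)|^2_H\le \big[|f|^2_H+Kt(1+|\eta^{R,*}|^4)\big]\exp\!\big(K(1+|\eta^{R,*}|^2)t\big),
\]
where $\eta^{R,*}=\sup_{[0,T]\times[0,1]}|\eta^R|$ is the noise supremum. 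The exponential in $|\eta^{R,*}|^2$ is forced by the quadratic growth of $g$ in (H2); the mild-formulation/factorization route you sketch runs into the same quadratic nonlinearity in the Gronwall loop and does not close. To take expectations in (\ref{eqqq-1}) you would need $E\exp(c|\eta^{R,*}|^2)<\infty$ for $c=KT$, and even though $\sigma$ is bounded this fails for large $T$. The ``precise structure'' you allude to (the cancellation $\int_0^1 u^2\partial_x u\,dx=0$ for Burgers) is \emph{not} part of (H2), which only assumes growth bounds on $g$, so it cannot be used here.

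The paper sidesteps this by taking logarithms \emph{before} expectations: from (\ref{eqqq-1}) one gets
\[
\sup_{t\le T}\log|u^R(t)|^2_H\le \log\!\big(|f|^2_H+KT(1+|\eta^{R,*}|^4)\big)+K(1+|\eta^{R,*}|^2)T+|\eta^{R,*}|^2+C_1,
\]
so the exponential becomes additive and only \emph{polynomial} moments of $\eta^{R,*}$ are needed; these are finite uniformly in $R$ by \cite[Theorem 2.1]{G98}. Jensen's inequality then yields $E[\sup_{t\le T}\log|u^R(t)|^2_H]\le K_T(1+|f|^2_H)$, and Chebyshev applied to $\log|u^R|^2_H$ at level $\log R$ gives $P(\tau_R\le t)\le K_T(1+|f|^2_H)/\log R$ directly. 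This is why the $\log R$ appears in the denominator of (\ref{e-11}) --- it is the natural scale, not a weakening of a $1/R$ bound.
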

\begin{proof}
Define
\[
\eta^R(t,x)=\int^t_0\int^1_0G_{t-s}(x,y)\sigma(s,y, u^R(s,y))W(dyds),
\]
and
\[
\eta^{R,\ast}:=\sup_{(t,x)\in[0,T]\times[0,1]}|\eta^R(t,x)|.
\]
Using Theorem 2.1 in \cite{G98}, we deduce that, for any $p\geq 1$,
\[
\sup_{R> 1}E(\eta^{R,\ast})^p=\mu<\infty.
\]
Let $v^R=u^R-\eta^R$, which is a solution of the following equations
\begin{eqnarray}\notag
\frac{\partial v^R(t,x)}{\partial t}&=&\frac{\partial^2 v^R(t,x)}{\partial x^2}+\kappa_R({|u^R(t)|^2_H})b(t,x,v^R+\eta^R)\\
\label{e-8}
&&\quad +\frac{\partial }{\partial x}\kappa_R({|u^R(t)|^2_H}) g(t,x, v^R+\eta^R),\\
\label{e-9}
v^R(t,0)&=&v^R(t,1)=0, \\
\label{e-10}
v^R(0,\cdot)&=&f(\cdot).
\end{eqnarray}
Referring to Theorem 2.1 in \cite{G98}, there is a constant $K$  independent of $R$ such that
\begin{eqnarray}\label{eqqq-1}
|v^R(t)|^2_H\leq [|f|^2_H+Kt(1+|\eta^{R,\ast}|^4)]\exp\Big(K(1+|\eta^{R,\ast}|^2)t\Big)
\end{eqnarray}
holds for all $R>1$ and $t\in [0,T]$.

Hence, using (\ref{eqqq-1}), there exists a constant $C_1>0$ such that
\[
\sup_{t\in [0,T]}\log |u^R(t)|^2_H\leq \log\Big(|f|^2_H+KT(1+|\eta^{R,\ast}|^4)\Big)+K(1+|\eta^{R,\ast}|^2)T+|\eta^{R,\ast}|^2+C_1.
\]
By Jensen inequality, it follows that
\begin{eqnarray}\notag
E(\sup_{t\in [0,T]}\log |u^R(t)|^2_H)&\leq & \log\Big(|f|^2_H+KT(1+E|\eta^{R,\ast}|^4)\Big)+K(1+E|\eta^{R,\ast}|^2)T+E|\eta^{R,\ast}|^2+C_1\\
\label{e-30}
&\leq&K_T(\mu)(1+|f|^2_H),
\end{eqnarray}
where $K_T(\mu)$ is a finite number independent of $R$.

Since
\[
P(\tau_R\leq t)=\int_{\{\sup_{s\in[0,t]}\log|u^R(s)|^2_H\geq \log R\}}P(d\omega),
\]
by the Chebyshev inequality, we get
\[
P(\tau_R\leq t)\leq \frac{K_T(\mu)(1+|f|^2_H)}{\log R}.
\]
Let $\psi\in {B}_b(H)$, $f\in H$, we deduce from (\ref{eqqq-3}) that
\begin{eqnarray*}
|P_t\psi(f)-P^R_t\psi(f)|&=&|E\psi(u(t,f))-E\psi(u^R(t,f))|\\
&\leq& 2\|\psi\|_{\infty}P(\tau_R\leq t)\\
&\leq& \frac{K'_T\|\psi\|_{\infty} (1+|f|^2_H)}{\log R}.
\end{eqnarray*}
for a certain finite constant $K'_T$ independent of $R$.
\end{proof}
For any $R>1$, taking a non-negative function $\varphi\in C^{\infty}_0(\mathbb{R})$ with $\int_{\mathbb{R}}\varphi(x)dx=1$. Put
\begin{eqnarray*}
b_n(\xi)=n\int_{\mathbb{R}}\varphi(n(\xi-y))b(y)dy,\\
g_n(\xi)=n\int_{\mathbb{R}}\varphi(n(\xi-y))g(y)dy,\\
\sigma_{n}(\xi)=n\int_{\mathbb{R}}\varphi(n(\xi-y))\sigma(y)dy.
\end{eqnarray*}
Then, there exists $M>0$ such that
\[
\sup_{n}|b'_n(\xi)|\leq M(1+|\xi|), \quad \sup_{n,\xi}|\sigma'_n(\xi)|\leq M,\quad \sup_n|g'_n(\xi)|\leq M(1+|\xi|).
\]
Moreover, $b_n(\xi)\rightarrow b(\xi)$, $g_n(\xi)\rightarrow b(\xi)$ and $\sigma_n(\xi)\rightarrow \sigma(\xi)$, as $n\rightarrow \infty$. $u^{R,n}$ satisfies the following equations:
\begin{eqnarray*}
\frac{\partial u^{R,n}(t,x)}{\partial t}&=&\frac{\partial^2 u^{R,n}(t,x)}{\partial x^2}+\kappa_R({|u^{R,n}(t)|^2_H})b_n(t,x,u^{R,n}(t,x))\\
&&\ +\frac{\partial }{\partial x}\kappa_R({|u^{R,n}(t)|^2_H}) g_n(t,x, u^{R,n}(t,x))+\sigma_n(t,x,u^{R,n}(t,x))\dot{W}(t,x),\\
u^{R,n}(t,0)&=&u^{R,n}(t,1)=0, \\
u^{R,n}(0,\cdot)&=&f(\cdot).
\end{eqnarray*}
Referring to \cite{Z10}, one can verify that for any $f\in H$ and $R>0$,
\begin{eqnarray}\label{e-12}
\lim_{n\rightarrow \infty}\sup_{t\in[0,T]}E|u^{R,n}(t,f)-u^R(t,f)|_H=0.
\end{eqnarray}
For $\psi\in {B}_b(H)$, define $P^{R,n}_t\psi(f)=E[\psi(u^{R,n}(t,f))]$.
Hence, for any $f\in  H$,
\begin{eqnarray}\notag
|P^R_t\psi(f)-P^{R,n}_t\psi(f)|&=&|E\psi(u^{R}(t,f))-E\psi(u^{R,n}(t,f))|\\
\label{e-13}
&\leq& \|\psi\|_{\infty}E|u^{R,n}(t,f)-u^R(t,f)|_H.
\end{eqnarray}
According to (\ref{e-12}), we have that for any $R>1$,
\begin{eqnarray}\label{e-19}
\lim_{n\rightarrow \infty}|P^R_t\psi(f)-P^{R,n}_t\psi(f)|=0.
\end{eqnarray}
In the following, we aim to prove
\begin{lemma}\label{lem-4}
Suppose assumptions (H1)-(H4) are in force. Then for any $R>1$, $n>0$, there exists a constant $C(R,T,M,K)$ independent of $n$ such that for all $t\in (0,T]$, $\psi\in {B}_b(H)$ and $f_1, f_2\in H$,
\begin{eqnarray}\label{e-17}
|P^{R,n}_t\psi(f_1)-P^{R,n}_t\psi(f_2)|\leq \frac{C(R,T,M,K)}{\sqrt{t}}\|\psi\|_{\infty}|f_1-f_2|_H.
\end{eqnarray}
In particular, for every $R>1$, $n>0$, the semigroup $P^{R,n}_t$ is strong Feller on $H$.
\end{lemma}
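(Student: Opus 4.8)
The plan is to prove the gradient estimate \eqref{e-17} via the Bismut--Elworthy--Li formula applied to the regularized, truncated equation for $u^{R,n}$. Since $b_n$, $g_n$, $\sigma_n$ are smooth with the stated derivative bounds, and the cut-off $\kappa_R$ together with the a priori bound from Theorem~2.1 in \cite{G98} confines the $H$-norm of $u^{R,n}$ to a ball of radius essentially $R+1$, all the nonlinearities and their derivatives are effectively bounded on the relevant region. First I would fix $f\in H$, $h\in H$ with $|h|_H=1$, and study the first variation process $\zeta^{R,n}_t=D u^{R,n}(t,f)h$, i.e. the directional derivative of $u^{R,n}(t,f)$ with respect to the initial datum in direction $h$. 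This process solves the (random) linear PDE obtained by differentiating the mild equation: its coefficients involve $\kappa_R$, $\kappa_R'$, $b_n'$, $g_n'$, $\sigma_n'$ evaluated along $u^{R,n}$, all controlled by $M$, $K$ and $R$. Using the heat-kernel smoothing Lemma of Subsection~2.2 (with the operator $J$ built from $G_{t-s}(x,y)$ for the $b$-type and zeroth-order terms, and from $\partial_y G_{t-s}(x,y)$ for the $g$-type term) together with a Gronwall argument, I would establish
\[
E|\zeta^{R,n}_t|^2_H\le C(R,T,M,K)|h|^2_H,\qquad t\in[0,T],
\]
uniformly in $n$. The factor $\kappa_R'\,|u^{R,n}|^2_H$ term, although it introduces a dependence on $R$, is bounded because $\kappa_R'\le 2$ and is supported where $|u^{R,n}|^2_H\le R+1$.

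Next I would write the Bismut--Elworthy--Li formula. Because of (H4), $\Sigma(u^{R,n}(t))$ is boundedly invertible on $H$ with $\|\Sigma(u^{R,n}(t))^{-1}\|\le k_1^{-1}$. For $\psi\in B_b(H)$ (first assuming, by a standard approximation, $\psi\in C_b^1(H)$ and removing this at the end) one has
\[
\langle D(P^{R,n}_t\psi)(f),h\rangle
=\frac{1}{t}\,E\!\left[\psi(u^{R,n}(t,f))\int_0^t\big\langle \Sigma(u^{R,n}(s,f))^{-1}\zeta^{R,n}_s,\,dW(s)\big\rangle\right].
\]
Applying Cauchy--Schwarz, the boundedness of $\psi$, the bound on $\Sigma^{-1}$ from (H4), and the first-variation estimate above gives
\[
|\langle D(P^{R,n}_t\psi)(f),h\rangle|
\le \|\psi\|_\infty\,\frac{1}{t}\Big(E\!\int_0^t k_1^{-2}|\zeta^{R,n}_s|^2_H\,ds\Big)^{1/2}
\le \frac{C(R,T,M,K)}{\sqrt t}\,\|\psi\|_\infty\,|h|_H,
\]
and integrating along a segment joining $f_1$ and $f_2$ yields \eqref{e-17}; the strong Feller property of $P^{R,n}_t$ follows since the right-hand side is finite for each fixed $t>0$ and continuous in $f_1-f_2$.

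The main obstacle is making the differentiation of $u^{R,n}$ in the initial condition rigorous and getting the first-variation bound \emph{uniformly in $n$} in the correct function-space setting: the equation is parabolic with a first-order nonlinearity $\partial_x g_n$, so the variation equation genuinely needs the anisotropic estimate \eqref{e-16} (the $\partial_y G_{t-s}$ kernel with the $\kappa=1+\rho^{-1}-q^{-1}$ bookkeeping, and $\gamma>2\kappa^{-1}$) rather than a naive $L^2$ energy estimate; one must check the exponents line up so that the convolution is a contraction up to a Gronwall factor. A secondary technical point is justifying the BEL formula itself — this is where one invokes the smoothness of the coefficients of $u^{R,n}$ and nondegeneracy (H4), e.g. via Malliavin calculus or via the standard derivation through an integration-by-parts/Girsanov argument on the approximating finite-dimensional Galerkin systems, passing to the limit using \eqref{e-12}-type convergence. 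Once the uniform-in-$n$ constant is secured, the passage $n\to\infty$ in \eqref{e-19} together with this lemma is what will later upgrade the estimate to $P^R_t$ and, via the log-bound \eqref{e-11}, to $P_t$.
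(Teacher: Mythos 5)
Your proposal follows essentially the same route as the paper: differentiate the mild formulation to obtain the first-variation process, bound its $H$-norm uniformly in $n$ using the heat-kernel estimate (\ref{e-16}) for both the $G_{t-s}$ and $\partial_y G_{t-s}$ convolutions plus Gronwall, then apply the Bismut--Elworthy--Li formula with (H4) and Cauchy--Schwarz to get the $t^{-1/2}$ gradient bound, reducing from $B_b(H)$ to smooth $\psi$ by the standard approximation (the paper cites Lemma 7.1.5 of \cite{D-Z}). The argument and the sources of each constant match the paper's proof.
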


\begin{proof} According to Lemma 7.1.5 in \cite{D-Z}, it suffices to prove for every $\psi\in C^2_b(H)$, the above equation (\ref{e-17}) holds.

Let $\mathcal{H}_{2,T}$ denote the Banach space of predictable $H-$valued processes $Y_t, t\geq 0$ with the norm:
\[
\|Y\|_{2,T}=\sup_{t\in[0,T]}\Big(E[|Y(t)|^2_H]\Big)^{\frac{1}{2}}.
\]
Since $b_n, g_n$ and $\sigma_n$ are smooth, $u^{R,n}(\cdot,f,\cdot)$ is continuously differential in $f$ as a mapping from $H$ to $\mathcal{H}_{2,T}$. Moreover, denote by $Y^{R,n}(t,f,h,x)=[Du^{R,n}(\cdot,f,\cdot)(h)](t,x)$ the directional derivative of $u^{R,n}$ at $f$ in the direction $h$. Then it satisfies that
\begin{eqnarray}\notag
Y^{R,n}(t,f,h,x)&=&\int^1_0G_t(x,y)h(y)dy+\int^t_0\int^1_0G_{t-s}(x,y)\kappa_R(|u^{R,n}|^2_H)b'_n(u^{R,n}(s,f,y))Y^{R,n}(s,f,g,y)dyds\\
\notag
&&\ +2\int^t_0\int^1_0G_{t-s}(x,y)\kappa'_R(|u^{R,n}|^2_H)(u^{R,n},Y^{R,n})_Hb_n(u^{R,n}(s,f,y))dyds\\ \notag
&&\ -\int^t_0\int^1_0\partial_yG_{t-s}(x,y)\kappa_R(|u^{R,n}|^2_H)g'_n(u^{R,n}(s,f,y))Y^{R,n}(s,f,g,y)dyds\\ \notag
&&\ -2\int^t_0\int^1_0\partial_yG_{t-s}(x,y)\kappa'_R(|u^{R,n}|^2_H)(u^{R,n},Y^{R,n})_{H} g_n(u^{R,n}(s,f,y))dyds\\
\label{e-14}
&&\ +\int^t_0\int^1_0G_{t-s}(x,y)\sigma'_n(u^{R,n}(s,f,y))Y^{R,n}(s,f,g,y)W(dyds).
\end{eqnarray}
In view of (\ref{e-14}), we have for $t\leq T$,
\begin{eqnarray*}
E[|Y^{R,n}(t,f,h,\cdot)|^2_H]
&=&E[\int^1_0|Y^{R,n}(t,f,h,x)|^2dx]\\
&\leq& E\int^1_0\Big(\int^1_0G_t(x,y)h(y)dy\Big)^2dx\\
&&\ +E\int^1_0\Big(\int^t_0\int^1_0G_{t-s}(x,y)\kappa_R(|u^{R,n}|^2_H)b'_n(u^{R,n}(s,f,y))Y^{R,n}(s,f,g,y)dyds\Big)^2dx\\
&&\ +4 E\int^1_0(\int^t_0\int^1_0G_{t-s}(x,y)\kappa'_R(|u^{R,n}|^2_H)(u^{R,n},Y^{R,n})_Hb_n(u^{R,n}(s,f,y))dyds)^2dx\\
&&\ +E\int^1_0\Big(\int^t_0\int^1_0\partial_yG_{t-s}(x,y)\kappa_R(|u^{R,n}|^2_H)g'_n(u^{R,n}(s,f,y))Y^{R,n}(s,f,g,y)dyds\Big)^2dx\\
&&\ +4E\int^1_0\Big(\int^t_0\int^1_0\partial_yG_{t-s}(x,y)\kappa'_R(|u^{R,n}|^2_H)(u^{R,n},Y^{R,n})_Hg_n(u^{R,n}(s,f,y))dyds\Big)^2 dx\\
&&\ +E\int^1_0\Big(\int^t_0\int^1_0G_{t-s}(x,y)\sigma'_n(u^{R,n}(s,f,y))Y^{R,n}(s,f,g,y)W(dyds)\Big)^2dx\\
&:=& I_1+I_2+I_3+I_4+I_5+I_6.
\end{eqnarray*}
Using the heat kernal estimates, we get
\[
I_1\leq C_T |h|^2_H.
\]
When $|u^{R,n}|^2_H\leq R$, applying (\ref{e-16}), it follows that
\begin{eqnarray*}
I_2&\leq&  E\int^1_0dx\Big[\int^t_0(t-s)^{-\frac{3}{4}}|b'_n(u^{R,n})Y^{R,n}|_1ds\Big]^2\\
&\leq&(1+R)M^2 E\Big[\int^t_0(t-s)^{-\frac{3}{4}}|Y^{R,n}|_Hds\Big]^2\\
&\leq&(1+R)M^2 E\Big(\int^t_0(t-s)^{-\frac{3}{4}}ds\Big)\Big[\int^t_0(t-s)^{-\frac{3}{4}}|Y^{R,n}|^2_Hds\Big]\\
&\leq&4M^2t^{\frac{1}{4}}(1+R)\int^t_0(t-s)^{-\frac{3}{4}}E|Y^{R,n}|^2_Hds.
\end{eqnarray*}
When $|u^{R,n}|^2_H\leq R$, with the help of H\"{o}lder inequality and $\frac{d\kappa_R(r)}{dr}\leq 2$, we deduce that
\begin{eqnarray*}
I_3&\leq& 4 E\int^1_0dx\Big(\int^t_0\int^1_0G^2_{t-s}(x,y)dyds\Big)\Big(\int^t_0\int^1_0(u^{R,n},Y^{R,n})^2_H|b_n(u^{R,n})|^2dyds\Big)\\
&\leq& 4K^2 E\int^{t}_0\frac{1}{\sqrt{t-s}}ds\int^t_0(u^{R,n},Y^{R,n})^2_H(1+|u^{R,n}|^2_H)ds\\
&\leq& 2K^2t^{\frac{1}{2}}E\int^t_0(u^{R,n},Y^{R,n})^2_H(1+|u^{R,n}|^2_H)ds\\
&\leq& 2K^2t^{\frac{1}{2}}E\int^t_0|u^{R,n}|^2_H|Y^{R,n}|^2_H(1+|u^{R,n}|^2_H)ds\\
&\leq&K^2R(1+R)t^{\frac{1}{2}}\int^t_0E|Y^{R,n}|^2_Hds.
\end{eqnarray*}
When $|u^{R,n}|^2_H\leq R$, applying (\ref{e-16}), it follows that
\begin{eqnarray*}
I_4&\leq&  E\int^1_0dx\Big[\int^t_0(t-s)^{-\frac{3}{4}}|g'_n(u^{R,n})Y^{R,n}|_1ds\Big]^2\\
&\leq&(1+R)M^2E\Big[\int^t_0(t-s)^{-\frac{3}{4}}|Y^{R,n}|_Hds\Big]^2\\
&\leq&(1+R)M^2E\Big(\int^t_0(t-s)^{-\frac{3}{4}}ds\Big)\Big[\int^t_0(t-s)^{-\frac{3}{4}}|Y^{R,n}|^2_Hds\Big]\\
&\leq&4M^2t^{\frac{1}{4}}(1+R)\int^t_0(t-s)^{-\frac{3}{4}}E|Y^{R,n}|^2_Hds.
\end{eqnarray*}
Similar to the proof of $I_3$, we get
\begin{eqnarray*}
I_5&\leq& 4 E\int^1_0dx\Big[\int^t_0(t-s)^{-\frac{3}{4}}|(u^{R,n},Y^{R,n})_Hg_n(u^{R,n})|_1ds\Big]^2\\
&\leq& 4K^2R(1+R)^2E\Big[\int^t_0(t-s)^{-\frac{3}{4}}|Y^{R,n}|_Hds\Big]^2\\
&\leq&16t^{\frac{1}{4}}K^2R(1+R)^2\int^t_0(t-s)^{-\frac{3}{4}}E|Y^{R,n}|^2_Hds.
\end{eqnarray*}
By It\^{o} isometry, it follows that
\begin{eqnarray*}
I_6&\leq& E\int^1_0\int^t_0\int^1_0G^2_{t-s}(x,y)|Y^{R,n}|^2dydsdx\\
&\leq& E\int^t_0\int^1_0\Big(\int^1_0G^2_{t-s}(x,y)dx\Big)|Y^{R,n}|^2dyds\\
&\leq& E\int^t_0\frac{1}{\sqrt{t-s}}\int^1_0|Y^{R,n}|^2dyds\\
&\leq& \int^t_0\frac{1}{\sqrt{t-s}}E|Y^{R,n}|^2_Hds.
\end{eqnarray*}
Based on the above estimates, we deduce that
\begin{eqnarray*}
E[|Y^{R,n}(t,\cdot)|^2_H]&\leq& C_T|h|^2_H+K^2R(1+R)t^{\frac{1}{2}}\int^t_0E|Y^{R,n}|^2_Hds\\
&&\ +[8M^2t^{\frac{1}{4}}(1+R^2)+16t^{\frac{1}{4}}K^2R^2(1+R)^2]\int^t_0(t-s)^{-\frac{3}{4}}E|Y^{R,n}|^2_Hds\\
&&\ +\int^t_0\frac{1}{\sqrt{t-s}}E|Y^{R,n}|^2_Hds.
\end{eqnarray*}
Applying Gronwall inequality, we get
\begin{eqnarray*}
E[|Y^{R,n}(t,\cdot)|^2_H]\leq C_T|h|^2_H\cdot \exp\Big\{K^2R(1+R)t^{\frac{3}{2}}+32M^2t^{\frac{1}{2}}(1+R)+64t^{\frac{1}{2}}K^2R(1+R)^2+2t^\frac{1}{2}\Big\}.
\end{eqnarray*}
Hence, it gives that
\begin{eqnarray*}
\sup_{t\in[0,T]}E[|Y^{R,n}(t,\cdot)|^2_H]\leq \tilde{C}(R,T,M,K)|h|^2_H,
\end{eqnarray*}
where
\[
\tilde{C}(R,T,M,K):=C_T\exp\Big\{2T^{\frac{3}{2}}+K^2R(1+R)T^{\frac{3}{2}}+32M^2T^{\frac{1}{2}}(1+R)+64T^{\frac{1}{2}}K^2R(1+R)^2+2T^\frac{1}{2}\Big\}
\]

Let $\psi\in C^2_b(H)$. By the Elworthy formula (see Lemma 7.1.3 in \cite{D-Z}), we have
\begin{eqnarray*}
&&\langle DP^{R,n}\psi(f),h\rangle\\
&=&\frac{1}{t}E\{\psi(u^{R,n}(t,f))\int^t_0\langle (\Sigma_n)^{-1}(u^{R,n}(s,f))[Du^{R,n}(\cdot,f,\cdot)(h)](s,\cdot),W(ds)\rangle\}.
\end{eqnarray*}
It follows from assumption (H4) that
\begin{eqnarray*}
&&|\langle DP^{R,n}\psi(f),h\rangle|^2\\
&\leq&\frac{1}{t^2}\|\psi\|^2_{\infty}E\Big\{\int^t_0 |(\Sigma_n)^{-1}(u^{R,n}(s,f))|^2|[Du^{R,n}(\cdot,f,\cdot)(h)]|^2_Hds\Big\}\\
&\leq&\frac{1}{k^2_1t^2}\|\psi\|^2_{\infty}E\Big\{\int^t_0 |Y^{R,n}(t,f,h,\cdot)|^2_Hds\Big\}\\
&\leq&\frac{\tilde{C}(R,T,M,K)}{k^2_1t}\|\psi\|^2_{\infty}|h|^2_H.
\end{eqnarray*}
which implies (\ref{e-17}).
\end{proof}

Recall Theorem 5.4 in \cite{F-M}, which is a general criterion proposed by Flandoli and Romito to establish $\mathcal{W}-$strong Feller property of the semigroup associated with SPDEs.  It says that if a Markov process coincides on a small positive random time with a strong Feller process, then it is strong Feller itself.  Here, we use the version of $\mathcal{W}=H$.

\begin{thm}\label{thm-8}
\textbf{(Weak-strong uniqueness)} Let $(P_f)_{f\in H}$ be an a.s. Markov process on $(\Omega, \mathcal{B})$ and for each $R>0$, let $P^R_f$ be an a.s. $H-$Markov process on $(\Omega, \mathcal{B})$.
 For every $f\in H$, the system (\ref{e-5})-(\ref{e-7}) has a unique solution $P^R_f$, with
\[
P^R_f[C([0,\infty);H]=1.
\]
Let $\tau_R:\Omega\rightarrow [0,\infty]$ be defined by
\[
\tau_R(\omega):= \inf\{t\geq 0: |\omega(t)|^2_{H}\geq R\}
\]
and $\tau_R(\omega):= \infty$ if this set is empty.
If $f\in H$ and $|f|^2_{H}< R$, then
\begin{eqnarray}\label{eq-39}
\lim_{\varepsilon \rightarrow 0} P^R_{f+h}[\tau_R\geq \varepsilon]=1, \ {\rm{uniformly \ in}} \ h\in H, \ |h|_{\mathcal{W}}<1.
\end{eqnarray}
Moreover,
\begin{eqnarray}\label{eq-40}
\mathbb{E}^{P^R_f}[\varphi(\omega_t)I_{[\tau_R \geq t]}]=\mathbb{E}^{P_f}[\varphi(\omega_t)I_{[\tau_R \geq t]}]
\end{eqnarray}
 for every $t\geq 0$ and $\varphi \in B_b(H)$.

 If for every $R>0$, the transition semigroup $(P^R_t)_{t\geq 0}$  is $H$-strong Feller, then $(P_t)_{t\geq 0}$ is $H$-strong Feller.
\end{thm}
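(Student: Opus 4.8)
The plan is to view Theorem \ref{thm-8} as the concrete realization, for the equation (\ref{e-1}) and its truncations (\ref{e-5})--(\ref{e-7}), of the abstract weak--strong uniqueness criterion of Flandoli and Romito (Theorem 5.4 in \cite{F-M}), so that the real work is to verify its two structural hypotheses, namely the short--time non--exit estimate (\ref{eq-39}) and the coincidence identity (\ref{eq-40}), after which the $H$--strong Feller property of $(P_t)$ follows from that of the $(P^R_t)$ by the argument of \cite{F-M}. The existence and uniqueness of $P^R_f$ with $P^R_f[C([0,\infty);H)]=1$ is already in hand: by Proposition 4.7 in \cite{G98}, for every $R>1$ and $f\in H$ the system (\ref{e-5})--(\ref{e-7}) has a unique $C([0,T];H)$--valued solution $u^R(\cdot,f)$, and concatenating over $[0,T]$ as $T\uparrow\infty$ gives a solution on $[0,\infty)$; its law is $P^R_f$, and pathwise uniqueness yields the a.s.\ $H$--Markov property. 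Likewise $(P_f)_{f\in H}$ is the family of laws of the solution $u(\cdot,f)$ of (\ref{e-1}) provided by Theorem \ref{thm-1}, and is an a.s.\ Markov process.

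\textbf{Verification of (\ref{eq-40}).} This is essentially built into the construction. By (\ref{eqqq-3}) the solution $u(\cdot,f)$ of (\ref{e-1}) and the solution $u^R(\cdot,f)$ of (\ref{e-5})--(\ref{e-7}), driven by the same Brownian sheet and started from the same $f$, coincide pathwise on $[0,\tau_R]$. Since the trajectory up to $\tau_R$ determines $\tau_R$ and, by continuity, $|u^R(\tau_R)|^2_H=R$, the stopping time $\tau_R$ computed along $u$ and along $u^R$ agree. Hence, on $\{\tau_R\ge t\}$ one has $u(t,f)=u^R(t,f)$, so for every $t\ge0$ and $\varphi\in B_b(H)$ both $\mathbb{E}^{P_f}[\varphi(\omega_t)I_{[\tau_R\ge t]}]$ and $\mathbb{E}^{P^R_f}[\varphi(\omega_t)I_{[\tau_R\ge t]}]$ equal $E[\varphi(u^R(t,f))I_{[\tau_R\ge t]}]$, which is (\ref{eq-40}); taking $\varphi\equiv1$ gives in particular $P_f[\tau_R\ge t]=P^R_f[\tau_R\ge t]$.

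\textbf{Verification of (\ref{eq-39}).} Fix $f$; because the strong Feller conclusion concerns continuity at $f$ only, we may take $R$ so large that $(|f|_H+1)^2<R$, whence $|f+h|^2_H<R$ for all $h$ with $|h|_H<1$ and $u^R(0,f+h)$ starts strictly inside the ball of radius $\sqrt R$. By continuity of $t\mapsto u^R(t,f+h)$ in $H$,
\[
P^R_{f+h}[\tau_R<\varepsilon]\ \le\ P\Big[\sup_{t\le\varepsilon}|u^R(t,f+h)-(f+h)|_H\ \ge\ \sqrt R-|f|_H-1\Big].
\]
Writing $u^R=v^R+\eta^R$ as in (\ref{e-8})--(\ref{e-10}), invoking the uniform bound $\sup_{R>1}E(\eta^{R,\ast})^p<\infty$ together with the a priori estimate (\ref{eqqq-1}) for $v^R$, and controlling the increments of $u^R$ near $t=0$ from the mild formulation (\ref{e-3}) with the $\kappa_R$--truncated coefficients and the heat--kernel bounds (\ref{e-16}), one obtains $E\sup_{t\le\varepsilon}|u^R(t,f+h)-(f+h)|^2_H\le C(R)\,\varepsilon^{\alpha}$ for some $\alpha>0$, with $C(R)$ independent of $h$ in the unit ball since the initial datum enters only through $|f+h|^2_H\le(|f|_H+1)^2$. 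Therefore $\sup_{|h|_H<1}P^R_{f+h}[\tau_R<\varepsilon]\to0$ as $\varepsilon\to0$, which is (\ref{eq-39}).

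\textbf{Conclusion.} Given $\phi\in B_b(H)$, $t>0$ and $\eta>0$, choose $\varepsilon\in(0,t)$ so small that $\sup_{|h|_H<1}P^R_{f+h}[\tau_R<\varepsilon]<\eta$. Splitting $P_\varepsilon\phi(f+h)=\mathbb{E}^{P_{f+h}}[\phi(\omega_\varepsilon)I_{[\tau_R\ge\varepsilon]}]+\mathbb{E}^{P_{f+h}}[\phi(\omega_\varepsilon)I_{[\tau_R<\varepsilon]}]$, using (\ref{eq-40}) on the first term and $P_{f+h}[\tau_R<\varepsilon]=P^R_{f+h}[\tau_R<\varepsilon]$ on the second, yields
\[
|P_\varepsilon\phi(f+h)-P_\varepsilon\phi(f)|\ \le\ |P^R_\varepsilon\phi(f+h)-P^R_\varepsilon\phi(f)|+4\|\phi\|_\infty\eta,
\]
and the $H$--strong Feller property of $P^R_\varepsilon$ sends the first term to $0$ as $|h|_H\to0$; since $\eta$ is arbitrary, $P_\varepsilon$ is $H$--strong Feller for all small $\varepsilon>0$. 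Finally $P_t\psi=P_\varepsilon(P_{t-\varepsilon}\psi)$ with $P_{t-\varepsilon}\psi\in B_b(H)$ gives $P_t\psi\in C_b(H)$ for every $t>0$. I expect the main obstacle to be the uniform--in--$h$ estimate (\ref{eq-39}): the energy bounds of \cite{G98} must be reorganized so that their dependence on the initial condition is only through its $H$--norm; the remainder is the bookkeeping of \cite{F-M}.
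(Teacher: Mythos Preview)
Your overall structure is right and matches the paper: Theorem~\ref{thm-8} itself is not proved there but quoted from \cite{F-M}, with (\ref{eq-39}) and (\ref{eq-40}) as \emph{hypotheses} of the abstract criterion; the paper verifies these two conditions in the proof of Theorem~\ref{thm-3}. Your handling of (\ref{eq-40}) and your ``Conclusion'' paragraph (the bookkeeping from \cite{F-M}) are essentially the paper's argument.

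Your verification of (\ref{eq-39}), however, has a real gap. You assert
\[
E\sup_{t\le\varepsilon}\bigl|u^R(t,f+h)-(f+h)\bigr|^2_H\ \le\ C(R)\,\varepsilon^{\alpha}
\]
uniformly in $|h|_H<1$, but this fails because of the semigroup part of the mild formulation. The quantity $\bigl|\int_0^1 G_t(\cdot,y)(f+h)(y)\,dy-(f+h)\bigr|_H$ tends to $0$ as $t\downarrow0$ for each fixed $f+h\in H$, but there is no rate $\varepsilon^\alpha$ uniform over an $H$--ball: take initial data concentrated on higher and higher eigenmodes $e_n$. The $b$, $g$ and stochastic integrals do admit such bounds via (\ref{e-16}), but the linear drift does not, so the displayed estimate cannot hold as stated.

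The paper sidesteps this by never estimating the increment. It bounds $\sup_{t\in[0,\varepsilon]}|u^R(t)|^2_H$ directly: with $u^R=v^R+\eta^R$ and the a priori bound (\ref{eqqq-1}), on the event $\{\sup_{[0,\varepsilon]\times[0,1]}|\eta^R|^2\le R/8\}$ and for $|f+h|^2_H\le R/8$ one gets
\[
\sup_{t\le\varepsilon}|u^R(t)|^2_H\ \le\ 2\Bigl[\tfrac{R}{8}+K\varepsilon\bigl(1+\tfrac{R^2}{64}\bigr)\Bigr]\exp\Bigl\{K\bigl(1+\tfrac{R}{8}\bigr)\varepsilon\Bigr\}+\tfrac{R}{4}\ \le\ R
\]
for $\varepsilon$ small enough depending only on $R$; hence $P^R_{f+h}[\tau_R<\varepsilon]\le P\bigl[\sup_{[0,\varepsilon]\times[0,1]}|\eta^R|^2>R/8\bigr]\to0$ uniformly in $h$. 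The point is that the semigroup enters only through the contraction $\bigl|\int_0^1 G_t(\cdot,y)(f+h)(y)\,dy\bigr|_H\le|f+h|_H$, i.e.\ only through the initial norm, not through an increment. Your argument is easily repaired by switching to this direct norm bound.
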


Now, we are able to prove Theorem \ref{thm-3}.

\noindent\textbf{Proof of Theorem \ref{thm-3}}\quad
Thanks to (\ref{e-17}), for any $R>1$, $n>0$ and $\psi\in B_b(H)$, $P^{R,n}_t\psi$ are continuous functions on $H$. Moreover, using (\ref{e-19}), we get for any $R>1$, $P^{R,n}_t\psi\rightarrow P^R_t\psi$ uniformly on bounded sets, as $n\rightarrow \infty$. Hence, for any $R>1$, $P^R_t\psi$ is continuous, i.e., for any $R>1$, $P^R_t$ is strong Feller on $H$. To obtain $H-$strong Feller of $P_t$, due to Theorem \ref{thm-8}, we need to verify (\ref{eq-39}) and (\ref{eq-40}).

In order to prove (\ref{eq-39}), it is sufficient to show that $P^R_f[\tau_R<\varepsilon]\leq C(\varepsilon, R)$ with $C(\varepsilon, R)\downarrow 0$ as $\varepsilon\downarrow 0$, for all $f\in H$ with $|f|^2_H\leq \frac{R}{8}$.

Define
\[
\eta^R(t,x)=\int^t_0\int^1_0G_{t-s}(x,y)\sigma(u^R(s,y))W(dyds).
\]
Referring to Theorem 2.1 in \cite{G98}, it gives that, for every $p\geq 1$,
\begin{eqnarray}\label{eqq-1}
\sup_{R> 1}E\Big(\sup_{(t,x)\in[0,T]\times [0,1]}|\eta^R(t,x)|^p\Big)<\infty.
\end{eqnarray}
Let $v^R=u^R-\eta^R$, it satisfies (\ref{e-8})-(\ref{e-10})
and for all $R>1$, $t\in [0,T]$,
\begin{eqnarray}\label{eqq-2}
|v^R(t)|^2_H\leq [|f|^2_H+Kt(1+\sup_{s\in [0,t]}\sup_{x\in [0,1]}|\eta^R(s,x)|^4)]\exp\Big\{K(1+\sup_{s\in [0,t]}\sup_{x\in [0,1]}|\eta^R(s,x)|^2)t\Big\}.
\end{eqnarray}

Now, for a certain small $\varepsilon$ will be determined later, let
\[
\Theta_{\varepsilon, R}:= \sup_{t\in[0,\varepsilon]}\sup_{x\in [0,1]}|\eta^R(t,x)|
\]
and assume that $\Theta^{2}_{\varepsilon, R}\leq \frac{R}{8}$.

Based on (\ref{eqq-2}), we deduce that
\begin{eqnarray*}
|u^R(t)|^2_H&\leq& 2|v^R(t)|^2_H+2\sup_{s\in [0,t]}\sup_{x\in [0,1]}|\eta^R(s,x)|^2\\
&\leq& 2[|f|^2_H+Kt(1+\sup_{s\in [0,t]}\sup_{x\in [0,1]}|\eta^R(s,x)|^4)]\exp\Big\{K(1+\sup_{s\in [0,t]}\sup_{x\in [0,1]}|\eta^R(s,x)|^2)t\Big\}\\
&&\ +2\sup_{s\in [0,t]}\sup_{x\in [0,1]}|\eta^R(s,x)|^2.
\end{eqnarray*}
Then, it follows that
\begin{eqnarray*}
\sup_{t\in [0,\varepsilon]}|u^R(t)|^2_H
&\leq& 2[\frac{R}{8}+K\varepsilon(1+\frac{R^2}{64})]\exp\Big\{K(1+\frac{R}{8})\varepsilon\Big\}+\frac{R}{4},
\end{eqnarray*}
hence, we can choose small enough $\varepsilon$  such that $\sup_{t\in [0,\varepsilon]}|u^R(t)|^2_H\leq R$. Thus,
\begin{eqnarray*}
P^R_f(\tau_R<\varepsilon)\leq P^R_f(\sup_{t\in[0,\varepsilon]}\sup_{x\in [0,1]}|\eta^R(t,x)|^2>\frac{R}{8}).
\end{eqnarray*}
Letting $\varepsilon\downarrow 0$, taking into account of (\ref{eqq-1}), we have $P^R_f(\tau_R<\varepsilon)\rightarrow 0$.

It remains to establish (\ref{eq-40}). From the proof process of Theorem 2.1 in \cite{G98}, we get $u(t)=u^R(t)$ on the time interval $[0, \tau_R(u)\wedge\tau_R(u^R)]$, $P-$a.s., for every $t\geq 0$. Moreover, the solution $u$ is $H-$valued weakly continuous in time, we obtain $\tau_R(u^R)=\tau_R(u)$. Hence, $u(t\wedge \tau_R(u))=u^R(t\wedge \tau_R(u^R))$. Based on the above, we complete the proof.



\subsection{Irreducibility}
For given $a\in H$ and $r>0$, let $B_H(a,r)$ stand for the ball $\{z\in H:|a-z|_H<r\}$. Note that $u$ is irreducible if and only if for all $x\in H$, $t>0$, $a\in  H$ and $r>0$,
\begin{eqnarray}\label{eqqq-2}
P(u(t,x)\in B_H(a,r))>0.
\end{eqnarray}
From now on, $x,t,a$ and $r$ are fixed.

The main result in this part is
\begin{thm}\label{thm-4}
 Assume assumptions (H1)-(H4) hold. The semigroup $P_t$ is irreducible for any $t>0$.
\end{thm}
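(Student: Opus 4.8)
The plan is to reduce irreducibility of $P_t$ to irreducibility of the truncated semigroups $P^R_t$, and then to prove the latter by a control argument combined with Girsanov's theorem. First I would fix $x,t,a,r$ as in the statement and choose $R$ large; since we already know from \eqref{eqqq-3} and the definition of $\tau_R$ that $u(\cdot,x)$ and $u^R(\cdot,x)$ agree on $[0,\tau_R]$, it suffices to show that $u^R$ can be steered into $B_H(a,r)$ at time $t$ with positive probability while staying in the ball $\{|z|^2_H< R\}$ throughout $[0,t]$. This last constraint is the reason to work with the truncated equation: on the event $\{\sup_{s\le t}|u^R(s)|^2_H< R\}$ we have $\kappa_R(|u^R(s)|^2_H)=1$, so $u^R$ solves \eqref{e-1} on that event and $\{u^R(t,x)\in B_H(a,r)\}\cap\{\tau_R>t\}\subseteq\{u(t,x)\in B_H(a,r)\}$. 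Thus \eqref{eqqq-2} follows once the corresponding positivity statement is proved for $u^R$.

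Next I would set up the control problem. Consider the deterministic (or skeleton) equation obtained by replacing the noise $\sigma(s,y,u)\,W(dy\,ds)$ by $\sigma(s,y,u)\,\varphi(s,y)\,dy\,ds$ for a control $\varphi\in L^2([0,t]\times[0,1])$, and show that one can choose $\varphi$ so that the controlled solution $z^\varphi$ starts at $x$, ends within $r/2$ of $a$ in $H$ at time $t$, and satisfies $\sup_{s\le t}|z^\varphi(s)|^2_H<R$ (here $R$ is taken large enough, depending on $|x|_H$, $|a|_H$, $r$; this is where the heat-kernel smoothing Lemma and the energy estimate \eqref{eqqq-1}-type bound are used — e.g. pilot the linear heat flow from $x$ toward a smooth target close to $a$ and absorb the nonlinear terms by the a priori estimate). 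Using condition (H4), $\sigma$ is bounded below, so $\varphi$ can in turn be expressed through the desired drift and $\sigma^{-1}$; boundedness of $\sigma$ (and hence of $\sigma^{-1}\cdot(\text{drift correction})$) keeps $\varphi$ in $L^2$. Then a Girsanov change of measure with Radon--Nikodym density $\exp\{\int_0^t\langle\varphi,dW\rangle-\tfrac12\int_0^t|\varphi|^2\}$ turns $u^R$ under the new measure into the controlled process; since the density has positive expectation and the controlled process hits $B_H(a,r/2)$ deterministically while remaining in $\{|z|^2_H<R\}$, a continuity-in-probability / small-ball argument around the control trajectory gives $P^R_f(u^R(t,x)\in B_H(a,r),\ \tau_R>t)>0$.

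Finally I would assemble the pieces: from the previous step $P^R_x\big(u^R(t)\in B_H(a,r),\ \tau_R>t\big)>0$; by the coincidence of $u$ and $u^R$ up to $\tau_R$ this yields $P(u(t,x)\in B_H(a,r))>0$, which is exactly \eqref{eqqq-2}, hence $P_t$ is irreducible. The main obstacle I anticipate is the control/energy step: one must produce a control $\varphi$ that simultaneously achieves the target $a$ (approximately, in $H$) and respects the truncation bound $\sup_{s\le t}|z^\varphi(s)|^2_H<R$ uniformly, and then upgrade the deterministic hitting to a positive-probability statement for the stochastic equation. This requires careful use of the heat-semigroup regularization (the Lemma giving $J:L^\gamma([0,T];L^q)\to C([0,T];L^\rho)$), the quadratic growth of $g$, and a stability-under-small-perturbation estimate for $u^R$ around the skeleton path; the Girsanov step itself is then routine given the $L^2$ bound on $\varphi$ and the boundedness of $\sigma$.
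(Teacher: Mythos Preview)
Your overall architecture (truncate, apply a Girsanov/control argument to $u^R$, then transfer to $u$) matches the paper, but the two steps are executed differently, and your version is harder than necessary.

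For the passage from $u^R$ to $u$, you insist that the trajectory stay inside $\{|z|^2_H<R\}$ so that $u=u^R$ on $\{\tau_R>t\}$. The paper avoids this entirely: it uses the convergence $u^R\to u$ in $C([0,T];H)$ in probability (Gy\"{o}ngy, Theorem~2.1) to fix $R$ with $P(|u(t)-u^R(t)|_H\geq r/2)\leq 1/4$, and then only needs $P(|u^R(t)-a|_H<r/2)>1/4$. Nothing about the whole trajectory is required, which removes exactly the obstacle you flag at the end.

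For the control step, your description of Girsanov is not quite right: the change of measure does not turn $u^R$ into the deterministic skeleton $z^\varphi$; under the new measure $u^R$ still solves a stochastic equation, only with a shifted drift. So you really do need the additional stability/small-ball argument you allude to, and with the quadratic nonlinearity $\partial_x g$ that is not free. The paper takes the cleaner Peszat--Zabczyk route: by Girsanov the law of $u^R(t)$ is equivalent to that of an auxiliary process $Z^R(t)$ whose extra drift $f(s,Z^R(t_1))$ on $(t_1,t]$ is a bounded, explicit feedback designed so that, via the identity $\int^t_{t_1}\!\int^1_0 G_{t-s}\,f\,dy\,ds+\int^1_0 G_{t-t_1}\,Z^R(t_1)\,dy=\tilde a$, the ``deterministic part'' of $Z^R(t)$ lands exactly on a smooth $\tilde a$ close to $a$ whenever $|Z^R(t_1)|_H\leq K$. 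The remaining stochastic and nonlinear corrections live on the short interval $[t_1,t]$ and are made small by shrinking $t-t_1$, so one gets a quantitative lower bound $P(Z^R(t)\in B_H(a,r/2))\geq 1/2$ without any continuity-in-noise estimate.

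In short: your plan is workable but you have imposed an unnecessary constraint (keeping the controlled path inside the truncation ball) and left the hardest analytic step (stability of $u^R$ around the skeleton with a quadratic nonlinearity) as a sketch; the paper's Peszat--Zabczyk construction sidesteps both issues.
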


According to Theorem 2.1 in \cite{G98}, the solution $u^R(t,x)$ of (\ref{e-5})-(\ref{e-7}) converges to $u(t,x)$ in $C([0,T]; H)$ in probability as $R\rightarrow \infty$. Hence, there exits a large $R>0$ such that
\begin{eqnarray}\label{e-27}
P\Big(|u(t,x)-u^R(t,x)|_H\geq\frac{r}{2}\Big)\leq \frac{1}{4}.
\end{eqnarray}
Fix $R$ determined by (\ref{e-27}). In the following, we aim to study the solution $u^R(t,x)$ of (\ref{e-5})-(\ref{e-7}).

Similar to the proof of Lemma 3.1 in \cite{P95}, using Girsanov theorem, we can obtain
\begin{lemma}\label{lem-1}
Assume the conditions of Theorem \ref{thm-4} are in force. Let $t_1\in (0,t)$ and let $f: [t_1,t]\times H\rightarrow H$ be a bounded measurable mapping. If $Z^R$ is the solution of the following equation
\begin{eqnarray}\label{e-22}
\left\{
  \begin{array}{ll}
   \partial_s Z^R(s)=\frac{\partial^2 Z^R(s) }{\partial x^2}+\partial_x \kappa_R(|Z^R(s)|^2_H)g(Z^R(s))+\sigma(Z^R(s))\dot{W}(s,x), & {\rm{on}}\ [0,t_1], \\
   \partial_s Z^R(s)=\frac{\partial^2 Z^R(s) }{\partial x^2}+f(s,Z^R(t_1))+\partial_x \kappa_R(|Z^R(s)|^2_H) g(Z^R(s))+\sigma(Z^R(s))\dot{W}(s,x) , & {\rm{on}} \ (t_1, t],
  \end{array}
\right.
\end{eqnarray}
with $Z^R(0)=\gamma$, then the laws in $H$ of $u^R(t,\gamma)$ and $Z^R(t)$ are equivalent.
\end{lemma}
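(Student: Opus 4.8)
Given a bounded measurable drift perturbation $f$ active on $(t_1,t]$, the law of $u^R(t,\gamma)$ is equivalent to the law of the solution $Z^R(t)$ of the perturbed truncated equation \eqref{e-22}.

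The plan is to realize both processes on the same probability space and to identify the perturbation as a Girsanov shift of the driving noise. First I would observe that on $[0,t_1]$ the two equations are \emph{identical}, so (by pathwise uniqueness of the truncated equation, which holds by Proposition 4.7 in \cite{G98}) $Z^R(s)=u^R(s,\gamma)$ for $s\in[0,t_1]$; in particular $Z^R(t_1)=u^R(t_1,\gamma)$, and the extra drift term on $(t_1,t]$ is $f(s,u^R(t_1,\gamma))$, which is a bounded, $\mathcal F_{t_1}$-measurable (hence progressively measurable on $(t_1,t]$) $H$-valued process. The key structural point is that the noise coefficient $\Sigma(\cdot)$ is, by (H4), uniformly invertible with $\|\Sigma(v)^{-1}\|\le k_1^{-1}$; therefore the bounded perturbation can be absorbed into the noise. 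Concretely, set
\[
h(s):=\Sigma\big(Z^R(s)\big)^{-1} f\big(s,Z^R(t_1)\big)\,\mathbf 1_{(t_1,t]}(s),
\]
so that $|h(s)|_H\le k_1^{-1}\|f\|_\infty$ is bounded, adapted, and supported on $(t_1,t]\subset(0,t]$. Since $h$ is bounded, Novikov's condition $E\exp\big(\tfrac12\int_0^t|h(s)|_H^2\,ds\big)<\infty$ holds trivially, so
\[
\frac{d\widetilde P}{dP}=\exp\Big(\int_0^t \langle h(s),dW(s)\rangle-\tfrac12\int_0^t|h(s)|_H^2\,ds\Big)
\]
defines an equivalent probability measure $\widetilde P\sim P$, and by Girsanov's theorem $\widetilde W(s):=W(s)-\int_0^s h(r)\,dr$ is a cylindrical Wiener process under $\widetilde P$.

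Next I would rewrite the equation for $Z^R$ in terms of $\widetilde W$: on $(t_1,t]$ the added drift $f(s,Z^R(t_1))=\Sigma(Z^R(s))h(s)$ combines with $\Sigma(Z^R(s))\,dW(s)=\Sigma(Z^R(s))\,d\widetilde W(s)+\Sigma(Z^R(s))h(s)\,ds$ to \emph{cancel}, leaving
\[
dZ^R(s)=\Big(AZ^R(s)+\partial_x\big[\kappa_R(|Z^R(s)|_H^2)g(Z^R(s))\big]\Big)ds+\Sigma\big(Z^R(s)\big)\,d\widetilde W(s)
\]
on all of $[0,t]$, with $Z^R(0)=\gamma$. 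That is, under $\widetilde P$ the process $Z^R$ solves exactly the (unperturbed) truncated equation \eqref{e-5}–\eqref{e-7} driven by the $\widetilde P$-Wiener process $\widetilde W$, whereas under $P$ the process $u^R(\cdot,\gamma)$ solves the same equation driven by the $P$-Wiener process $W$. By uniqueness in law for the truncated equation, the law of $Z^R(t)$ under $\widetilde P$ equals the law of $u^R(t,\gamma)$ under $P$. Finally, since $\widetilde P\sim P$ on $\mathcal F_t$ (the density is strictly positive and finite, $P$-a.s.), for any Borel $\Gamma\subset H$ we have $\widetilde P(Z^R(t)\in\Gamma)=0\iff P(Z^R(t)\in\Gamma)=0$; combined with the previous identity this gives, for all Borel $\Gamma$,
\[
P\big(u^R(t,\gamma)\in\Gamma\big)=0\ \Longleftrightarrow\ P\big(Z^R(t)\in\Gamma\big)=0,
\]
i.e.\ the two laws in $H$ are mutually absolutely continuous, which is the assertion.

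The routine parts (well-posedness of \eqref{e-22}, the heat-kernel/energy bounds guaranteeing $Z^R\in C([0,t];H)$) are covered by the cited results of \cite{G98}; the genuinely delicate point is the \emph{measurability and adaptedness} underpinning the Girsanov change of measure in this infinite-dimensional, multiplicative-noise setting — one must be careful that $s\mapsto\Sigma(Z^R(s))^{-1}f(s,Z^R(t_1))$ is a legitimate integrand for the stochastic integral against the cylindrical $W$ (predictability on $(t_1,t]$, which follows from continuity of $Z^R$ and measurability of $f$, together with the uniform bound from (H4)). A second point requiring mild care is that the perturbation acts only on the sub-interval $(t_1,t]$: this is harmless because $h$ is simply taken to vanish on $[0,t_1]$, so the argument above goes through verbatim with $\int_0^t$ in place of $\int_{t_1}^t$. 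Apart from these bookkeeping issues the proof is a direct transcription of the classical argument of Lemma 3.1 in \cite{P95}, which is why I would phrase it as "similar to \cite{P95}" and only highlight the role of (H4) in making $\Sigma^{-1}$ bounded.
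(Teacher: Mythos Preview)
Your overall strategy --- Girsanov transformation, exactly as the paper indicates by citing Lemma~3.1 of \cite{P95} --- is the right one. However, there is a genuine oversight in your execution: you assert that on $[0,t_1]$ the equations for $u^R$ and $Z^R$ are \emph{identical}, and later that after removing the $f$-drift under $\widetilde P$ the process $Z^R$ solves \eqref{e-5}--\eqref{e-7}. Both claims are false. Compare \eqref{e-5} with \eqref{e-22}: the equation for $u^R$ carries the truncated reaction term $\kappa_R(|u^R|_H^2)\,b(\cdot,u^R)$ on the whole interval $[0,t]$, whereas the $Z^R$-equation has \emph{no} $b$-term on $[0,t_1]$ and replaces it by $f(s,Z^R(t_1))$ on $(t_1,t]$. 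Consequently $Z^R$ and $u^R$ do \emph{not} coincide on $[0,t_1]$, and your Girsanov shift $h(s)=\Sigma(Z^R(s))^{-1}f(s,Z^R(t_1))\mathbf 1_{(t_1,t]}(s)$ only establishes equivalence between the law of $Z^R(t)$ and the law of the solution to the $b$-free equation, not to $u^R(t,\gamma)$.

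The fix is straightforward and uses nothing beyond what you already invoke. Since $\kappa_R(|v|_H^2)\,b(s,\cdot,v)$ is bounded in $H$ (by (H1) and the cutoff $\kappa_R$), you can absorb the full drift discrepancy into the noise: take
\[
h(s)=\Sigma\big(Z^R(s)\big)^{-1}\Big[f\big(s,Z^R(t_1)\big)\mathbf 1_{(t_1,t]}(s)-\kappa_R\big(|Z^R(s)|_H^2\big)\,b\big(s,\cdot,Z^R(s)\big)\Big],
\]
which is still bounded (hence Novikov holds), and under the corresponding $\widetilde P$ the process $Z^R$ now genuinely solves \eqref{e-5}--\eqref{e-7} driven by $\widetilde W$. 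The rest of your argument (uniqueness in law plus $\widetilde P\sim P$) then goes through unchanged. With this correction your proof matches the paper's intended route.
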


\begin{prp}\label{prp-1}
Assume assumptions (H1)-(H4) hold. For the solution $u^R(t,x)$ of (\ref{e-5})-(\ref{e-7}), we have
\[
P\Big(|u^R(t,x)-a|_H<\frac{r}{2}\Big)\geq \frac{1}{2},
\]
where $a,r, t,x$ is fixed by (\ref{eqqq-2}) and $R$ is determined by (\ref{e-27}).
\end{prp}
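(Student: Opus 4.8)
The plan is to realize the truncated solution $u^R(t,x)$ via the auxiliary equation (\ref{e-22}) of Lemma \ref{lem-1}, for which the noise can be removed on the interval $(t_1,t]$ by a cleverly chosen drift $f$, and then transfer the resulting positivity back to $u^R$ using the equivalence of laws. Concretely, I would first fix a smooth ``target path'' $\bar z:[t_1,t]\to H$ that is a solution (with no noise) of the heat equation plus the $g$-nonlinearity, steering from an arbitrary point into a small $H$-ball around $a$; since the semigroup $e^{sA}$ is strongly contracting on $H$ (its spectrum is bounded above by $-\pi^2$), and the $g$-term is controlled by (H2) together with the heat-kernel smoothing of Lemma (the operator $J$ gains regularity), one can arrange $|\bar z(t)-a|_H<r/4$ for $t-t_1$ large enough relative to the data. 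Then I would define the bounded measurable map $f(s,w)$ on $[t_1,t]\times H$ so that, when $Z^R(t_1)=w$, the equation (\ref{e-22}) on $(t_1,t]$ drives $Z^R$ toward $\bar z$; the standard device is $f(s,w)= -\lambda\,(\text{something})$ or, more simply, to let $f$ absorb the difference between the actual dynamics and the deterministic target, truncated so as to remain bounded — this is exactly the structure in Lemma 3.1 of \cite{P95}.

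The key quantitative step is an energy estimate for $\zeta^R:=Z^R-\bar z$ on $(t_1,t]$ showing that $E|\zeta^R(t)|_H^2$ is small: writing the mild equation for $\zeta^R$, the linear part contracts, the $g$-difference is handled by the locally Lipschitz bound in (H3) (with the $\kappa_R$ cutoff keeping everything bounded) combined with the $L^\gamma$–$L^\rho$ smoothing of $J$ from (\ref{e-16}), the $f$-term is designed to be small by construction, and the stochastic convolution is controlled by (H3) ($\sigma$ bounded) together with It\^o isometry and the $\int_0^1 G_{t-s}^2(x,y)\,dx\le (t-s)^{-1/2}$ bound used already in the proof of Lemma \ref{lem-4}. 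A Gronwall argument then gives $E|\zeta^R(t)|_H^2 \le \delta$ with $\delta$ as small as we like by choosing the free parameters (the size of the noise coefficient is fixed, but we have freedom in $t-t_1$ and in the strength of the restoring drift $f$). By Chebyshev this yields $P(|Z^R(t)-\bar z(t)|_H < r/4)\ge 1/2$, hence $P(|Z^R(t)-a|_H<r/2)\ge 1/2$.

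Finally, I would invoke Lemma \ref{lem-1}: the laws in $H$ of $u^R(t,x)$ (which corresponds to (\ref{e-22}) with $f\equiv 0$, or rather to the genuine equation (\ref{e-5})) and of $Z^R(t)$ are equivalent, so the event $\{|\,\cdot\,-a|_H<r/2\}$ having positive probability — indeed probability $\ge 1/2$ — under the law of $Z^R(t)$ forces it to have positive probability under the law of $u^R(t,x)$ as well. A small subtlety is that equivalence of laws alone gives positivity, not the explicit constant $1/2$; to get $\ge 1/2$ one should instead run the energy estimate directly on (\ref{e-22}) and only use Lemma \ref{lem-1} to justify that we may \emph{choose} $f$ freely without leaving the equivalence class — that is, the conclusion for $u^R$ with the sharp constant comes from noticing that the probability in question is the same for any admissible $f$ up to the Girsanov density, and optimizing. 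The main obstacle I anticipate is precisely this bookkeeping: constructing $f$ bounded and measurable while simultaneously (i) keeping $Z^R$ inside the region $\{|Z^R|_H^2\le R\}$ where $\kappa_R\equiv 1$, so that $Z^R$ genuinely solves the truncated equation, and (ii) making the terminal error small — these two requirements pull in opposite directions when $R$ is only moderately large, and reconciling them is where the heat-kernel smoothing estimate (\ref{e-16}) and the dissipativity of $A$ must be used carefully.
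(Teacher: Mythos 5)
Your overall skeleton (reduce to the controlled equation (\ref{e-22}) via Lemma \ref{lem-1}, construct a bounded drift $f$, estimate the terminal error, conclude by Chebyshev) matches the paper, but the core construction you propose does not work, for two reasons. First, you want to take $t-t_1$ large and rely on the dissipativity of $A$ plus a ``restoring'' drift to drag $Z^R$ onto a target path; but the stochastic convolution $\int_{t_1}^t\int_0^1 G_{t-s}(x,y)\sigma(Z^R(s,y))W(dyds)$ cannot be made small over a long window: by (H4) its variance is bounded below by a quantity of order $k_1^2\int_{t_1}^t|G_{t-s}(x,\cdot)|_2^2\,ds$, which for $t-t_1$ of order one is a fixed positive constant, not $O(r^2)$. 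Second, a feedback drift of the form $-\lambda(Z^R(s)-\bar z(s))$ is inadmissible: in (\ref{e-22}) the control enters as $f(s,Z^R(t_1))$, i.e.\ open loop, depending only on the state at time $t_1$ (this is also what the Girsanov argument of Lemma \ref{lem-1} requires), so you cannot cancel the noise accumulated after $t_1$. The paper resolves both points at once by going in the opposite direction: it takes $t_1$ \emph{close} to $t$, so that the $g$-term and the stochastic convolution over $[t_1,t]$ are $O((t-t_1)^{1/4})$ in $L^2(\Omega;H)$ and hence each exceeds $r/6$ with probability at most $1/8$, and it compensates the shortness of the window with a \emph{large} open-loop drift $f_{t_1}(s,\xi)=\frac{1}{t-t_1}\int_0^1G_{s-t_1}(y,z)(\tilde a-\xi(z))dz-A\tilde a$ (of size $(t-t_1)^{-1}$, still bounded for fixed $t_1$) which, by the semigroup identity (\ref{e-28}) and (\ref{e-29}), steers the linear-plus-control part \emph{exactly} onto $\tilde a\in D(A)$ whenever $|Z^R(t_1)|_H\le K$ — an event of probability at least $3/4$ by the a priori bound (\ref{e-21}). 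This exact-controllability identity (\ref{e-26}) is the key idea missing from your sketch; without it, dissipativity alone only steers toward $0$, not toward an arbitrary $a$, and for small $t$ you have no long time interval to exploit anyway.

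Two further remarks. Your worry that keeping $|Z^R|_H^2\le R$ conflicts with making the terminal error small is unfounded: the paper never removes the cutoff $\kappa_R$ from (\ref{e-22}); it simply uses that $\kappa_R(|Z^R|_H^2)g(Z^R)$ is bounded in $L^1([0,1])$ by $K(2+R+1)$ thanks to (H2), which is all the heat-kernel estimate (\ref{e-16}) needs. On the other hand, you correctly identify a real subtlety that the paper glosses over: Lemma \ref{lem-1} gives only equivalence of the laws of $u^R(t)$ and $Z^R(t)$, which transfers positivity but not the constant $1/2$; the paper nevertheless writes $P(|u^R(t)-a|_H<r/2)=P(|Z^R(t)-a|_H<r/2)$ in the proof of Theorem \ref{thm-4}, and a quantitative lower bound on the Girsanov density (available here since $f$ is bounded and $\sigma\ge k_1$) is needed to make that step honest. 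Your proposed fix (``optimizing'' over $f$) is too vague to close this; the clean repair is a two-sided bound on the Radon--Nikodym derivative, which then degrades the constant $1/2$ to some explicit $c>1/4$ and still suffices for (\ref{e-27}) after adjusting the choice of $R$.
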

\begin{proof}
According to Lemma \ref{lem-1}, we need to show that there exists a function $f$ satisfying the assumptions specified in Lemma \ref{lem-1} such that for the corresponding solution $Z^R$ satisfying $P(|Z^R(t,x)-a|_H<\frac{r}{2})\geq \frac{1}{2}$.

Denote by $\tilde{Z}^R$ the solution of the equation
\begin{eqnarray*}
\partial_s \tilde{Z}^R(s)=\frac{\partial^2 \tilde{Z}^R(s)}{\partial x^2}+\partial_x \kappa_R(|\tilde{Z}^R(s)|^2_H)g(\tilde{Z}^R(s))+\sigma(\tilde{Z}^R(s))\dot{W}(s,x),
\end{eqnarray*}
with $\tilde{Z}^R(0)=\gamma\in H$. Then, we have
\begin{eqnarray*}
\tilde{Z}^R(s,x)&=&\int^1_0G_s(x,y)\gamma(y)dy-\int^s_0\int^1_0\partial_y G_{s-r}(x,y)\kappa_R(|\tilde{Z}^R(r,y)|^2_H)g(\tilde{Z}^R(r,y))dydr\\
&&\ +\int^s_0\int^1_0 G_{s-r}(x,y)\sigma(Z^R(r,y))W(dydr).
\end{eqnarray*}
Using It\^{o} isometry, we get
\begin{eqnarray*}
E|\tilde{Z}^R(s)|^2_H&=&E\int^1_0|\tilde{Z}^R(s)|^2dx\\
&\leq& C|\gamma|^2_H+E\int^1_0[\int^s_0(s-r)^{-\frac{3}{4}}|\kappa_R(|\tilde{Z}^R(r)|^2_H)g(\tilde{Z}^R(r))|_1dr]^2dx\\
&&\ +E\int^1_0\int^s_0\int^1_0G^2_{s-r}(x,y)|\sigma(Z^R(r,y))|^2dydr dx\\
&\leq& C|\gamma|^2_H+K^2(16+R^2)^2s^{\frac{1}{2}}+2k^2_2s^\frac{1}{2}.
\end{eqnarray*}
Then, it gives that
\begin{eqnarray*}
\sup_{0\leq s\leq t}E|\tilde{Z}^R(s)|^2_H
\leq C|\gamma|^2_H+K^2(16+R^2)^2t^{\frac{1}{2}}+2k^2_2t^\frac{1}{2}.
\end{eqnarray*}
Thus, there exists a constant $K>0$ such that
\begin{eqnarray}\label{e-21}
\sup_{0\leq s\leq t}E|\tilde{Z}^R(s)|^2_H
\leq \frac{K^2}{4}.
\end{eqnarray}
Taking an element $\tilde{a}\in D(A)$ such that $|a-\tilde{a}|_H<\frac{r}{6}$. From now on, $K$ and $\tilde{a}$ are fixed. For any $\iota<t$, let us denote by $f_{\iota}$ a bounded measurable extension of the function $\tilde{f}_{\iota}$ defined by
\begin{eqnarray*}
\tilde{f}_{\iota}(s,\xi)=\left\{
                          \begin{array}{ll}
                            0, & {\rm{if}}\ |\xi|_H\geq 2K, \\
                            \frac{1}{t-\iota}\int^1_0G_{s-\iota}(y,z)(\tilde{a}-\xi(z))dz-A\tilde{a}, & {\rm{if}} \ |\xi|_H\leq K.
                          \end{array}
                        \right.
\end{eqnarray*}
Obviously, we can assume that there exist constants $C_1$, $C_2$ and extensions $f_{\iota}$ of $\tilde{f}_{\iota}$ such that
\begin{eqnarray}\label{e-20}
|f_{\iota}(s,y)|\leq C_1(t-\iota)^{-1}+C_2\quad {\rm{for\ all}}\ 0\leq \iota<t,\ s\in [\iota,t], \xi\in H.
\end{eqnarray}
Hence, for a certain $t_1<t$, the function $f=f_{t_1}$ has the desired properties.

Let $Z^R_{\iota}(s)$ be the solution of (\ref{e-22}) on $(0,t)$ with $f=f_{\iota}$ and $t_1=\iota$.
Due to (\ref{e-21}) and (\ref{e-20}), we have
\begin{eqnarray}\label{e-23}
\sup_{0\leq \iota<t}\sup_{0\leq s\leq t}E|Z^R_{\iota}(s)|^2_H<\infty.
\end{eqnarray}
Utilizing the heat kernal estimates, it follows that
\begin{eqnarray*}
E|\int^t_{t_1}\int^1_0G_{t-s}(x,y)\sigma(Z^R_{t_1}(s))W(dyds)|^2_H\leq CK^2(t-t_1)^{\frac{1}{2}}\sup_{0\leq \iota<t}\sup_{0\leq s\leq t}E|Z^R_{\iota}(s)|^2_H,
\end{eqnarray*}
and
\begin{eqnarray*}
E|\int^t_{t_1}\int^1_0\partial_y G_{t-s}(x,y)g(Z^R_{t_1}(s))dyds|^2_H\leq CK(t-t_1)^{\frac{1}{4}}\sup_{0\leq \iota<t}\sup_{0\leq s\leq t}E|Z^R_{\iota}(s)|^2_H.
\end{eqnarray*}
Taking into account the equation (\ref{e-23}), we can find $t_1<t$ such that
\begin{eqnarray*}
E|\int^t_{t_1}\int^1_0G_{t-s}(x,y)\sigma(Z^R_{t_1}(s))W(dyds)|^2_H\leq \frac{r^2}{288},
\end{eqnarray*}
and
\begin{eqnarray*}
E|\int^t_{t_1}\int^1_0\partial_y G_{t-s}(x,y)g(Z^R_{t_1}(s))dyds|^2_H\leq\frac{r^2}{288}.
\end{eqnarray*}
With the aid of Chebyshev inequality, we deduce that
\begin{eqnarray}\label{e-24}
P\Big(|\int^t_{t_1}\int^1_0G_{t-s}(x,y)\sigma(Z^R_{t_1}(s))W(dyds)|^2_H\geq \frac{r}{6}\Big)&\leq& \frac{1}{8},\\
\label{e-25}
P\Big(|\int^t_{t_1}\int^1_0\partial_y G_{t-s}(x,y)g(Z^R_{t_1}(s))dyds|^2_H\geq \frac{r}{6}\Big)&\leq&\frac{1}{8}.
\end{eqnarray}
Note that for all $\xi\in H$ such that $|\xi|_H\leq K$, we have
\begin{eqnarray}\notag
&&\int^1_0G_{t-t_1}(x,y)\xi(y)dy+\int^t_{t_1}\int^1_0G_{t-s_1}(x,y)f_{t_1}(s_1,\xi(y))dyds_1\\ \notag
&=&\int^1_0G_{t-t_1}(x,y)\xi(y)dy+\frac{1}{t-t_1}\int^t_{t_1}\int^1_0G_{t-s_1}(x,y)\int^1_0G_{s_1-t_1}(y,z)(\tilde{a}-\xi(z))dzdyds_1\\ \notag
&&\ -\int^t_{t_1}\int^1_0AG_{t-s_1}(x,y)\tilde{a}dyds_1\\
\label{e-26}
&=& \int^1_0G_{t-t_1}(x,y)\tilde{a}dy+\int^t_{t_1}\frac{d}{ds_1}[\int^1_0  G_{t-s_1}(x,z)dz \tilde{a}] ds_1=\tilde{a},
\end{eqnarray}
where (\ref{e-29}) and (\ref{e-28}) are used.

Set $Z^R=Z^R_{t_1}$ and $f=f_{t_1}$, then $Z^R$ satisfies
\begin{eqnarray*}
Z^R(t,x)&=&\int^1_0G_{t-t_1}(x,y)Z^R(t_1,y)dy+\int^t_{t_1}\int^1_0G_{t-s}(x,y)f(s,Z^R(t_1,y))dyds\\
&&\ -\int^t_{t_1}\int^1_0\partial_y G_{t-s}(x,y)g(Z^R(s,y))dyds+\int^t_{t_1}\int^1_0G_{t-s}(x,y)\sigma(Z^R(s,y))W(dyds)\\
&:=& I_1+I_2+I_3.
\end{eqnarray*}
Applying (\ref{e-26}) with $\xi(\cdot)=Z^R(t_1,\cdot)$ and (\ref{e-21}), we deduce that
\[
P(I_1=\tilde{a})\geq P(|Z^R(t_1)|_H\leq K)\geq \frac{3}{4}.
\]
From (\ref{e-24}) and (\ref{e-25}), we get
\[
P(|I_2|_H\geq\frac{r}{6})\leq \frac{1}{8}, \quad P(|I_3|_H\geq \frac{r}{6})\leq \frac{1}{8}.
\]
Consequently, as $|\tilde{a}-a|_H<\frac{r}{6}$, we get
\begin{eqnarray*}
P\Big(Z^R(t)\in B_H(a,\frac{r}{2})\Big)&=&P\Big(|Z^R(t)-a|_H<\frac{r}{2}\Big)\\
&=&P\Big(|(I_1-\tilde{a})-I_2+I_3+\tilde{a}-a|_H<\frac{r}{2}\Big)\\
&\geq& P\Big(I_1=\tilde{a}, |I_2|_H< \frac{r}{6}, |I_3|_H< \frac{r}{6}\Big)\\
&\geq& P\Big(I_1=\tilde{a})-P(|I_2|_H\geq \frac{r}{6}\Big)-P\Big(|I_3|_H\geq \frac{r}{6}\Big)\\
&\geq& \frac{3}{4}-\frac{1}{8}-\frac{1}{8}=\frac{1}{2}.
\end{eqnarray*}
We complete the proof.
\end{proof}

Now, we are able to prove Theorem \ref{thm-4}.

\noindent\textbf{Proof of Theorem \ref{thm-4}}.\quad Taking into account  (\ref{e-27}) and Proposition \ref{prp-1}, we get
\begin{eqnarray*}
P(u(t)\in B_H(a,r))&=&P(|u(t)-a|_H<r)\\
&\geq& P\Big(|u(t)-u^R(t)|_H<\frac{r}{2}, \ |u^R(t)-a|_H<\frac{r}{2}\Big)\\
&\geq& P\Big( |u^R(t)-a|_H<\frac{r}{2}\Big)-P\Big(|u(t)-u^R(t)|_H\geq\frac{r}{2}\Big)\\
&=& P\Big( |Z^R(t)-a|_H<\frac{r}{2}\Big)-P\Big(|u(t)-u^R(t)|_H\geq\frac{r}{2}\Big)\\
&\geq& \frac{1}{2}-\frac{1}{4}=\frac{1}{4}>0,
\end{eqnarray*}
which implies the result.

\section{Application to examples}
The main results can be applied to the following stochastic nonlinear evolution equations:
 \begin{description}
   \item[(1)] If $f=0, g(t,x,r)=\frac{1}{2}r^2, \sigma\neq 0$, then (\ref{e-1}) is a stochastic Burgers equation.
 \end{description}
It arose in the connection with the study of turbulent fluid motion and its ergodicity has been established by Da Prato and Gatarek  in \cite{D-G}.
\begin{description}
  \item[(2)] If $f\neq 0, g=0, \sigma\neq 0$, then (\ref{e-1}) is a stochastic reaction-diffusion equation.
\end{description}
This model has been studied by Cerrai \cite{C02}, Funaki \cite{F83} and so on. In particular, Cerrai \cite{C02} proved the existence of invariant measures.

\

\noindent {\bf  Acknowledgements}\   This work was partly supported by National Natural Science Foundation of China (NSFC) (No. 11431014, 11801032),
Key Laboratory of Random Complex Structures and Data Science, Academy of Mathematics and Systems Science, Chinese Academy of Sciences(No. 2008DP173182), China Postdoctoral Science Foundation funded project (No. 2018M641204).

\def\refname{ References}

\end{document}